\theoremstyle{plain}
\newtheorem{theorem}{Theorem}
\newtheorem{lemma}{Lemma}
\newtheorem{proposition}{Proposition}
\newtheorem*{thm*}{Theorem}
\theoremstyle{remark}
\newtheorem{remark}{Remark}
\newtheorem*{remark*}{Remark}
\theoremstyle{definition}
\newtheorem*{definition*}{Definition}
\newtheorem{definition}{Definition}
\newtheorem{example}{Example}
\newtheorem*{problem*}{Problem}
\def\C{\mathds{C}}
\def\N{\mathds{N}}
\def\R{\mathds{R}}
\def\Z{\mathds{Z}}
\def\n{\nu}
\def\PDE{{PDE}}
\def\e{\mathrm{e}}
\def\T{\mathds{T}}
\def\one{{\mathds{1}}}
\def\Hn{\mathcal{H}^{d-1}}
\newcommand{\Krand}{K_{\mathrm{rand}}(\mathrm{e})}
\newcommand{\Crand}{C_{\mathrm{rand}}(\mathrm{e})}
\newcommand{\Cdet}{C_{\mathrm{det}}}
\DeclareMathOperator*{\supp}{supp}
\DeclareMathOperator*{\ran}{ran}
\begin{document}

\title{On the randomised stability constant for inverse problems}

\author{Giovanni S.\ Alberti}

\address{MaLGa Center, Department of Mathematics, University of Genoa, Via Dodecaneso 35,
16146 Genova, Italy.}

\email{giovanni.alberti@unige.it}

\author{Yves Capdeboscq}

\address{Université de Paris, CNRS, Sorbonne Université, Laboratoire Jacques-Louis
Lions, UMR 7598, Paris, France}

\email{yves.capdeboscq@sorbonne-universite.fr}

\author{Yannick Privat}

\address{IRMA, Université de Strasbourg, CNRS UMR 7501, 7 rue René Descartes,
67084 Strasbourg, France}

\email{yannick.privat@unistra.fr}

\subjclass[2010]{65J22, 35R30}
\begin{abstract}
In this paper we introduce the randomised stability constant for abstract
inverse problems, as a generalisation of the randomised observability
constant, which was studied in the context of observability inequalities
for the linear wave equation. We study the main properties of the
randomised stability constant and discuss the implications for the
practical inversion, which are not straightforward. 
\end{abstract}

\keywords{Inverse problems, observability constant, compressed sensing, passive
imaging, regularisation, randomisation, deep learning, electrical
impedance tomography.}
\maketitle

\section{Introduction\label{sec:intro}}

Inverse problems are the key to all experimental setups where the
physical quantity of interest is not directly observable and must
be recovered from indirect measurements. They appear in many different
contexts including medical imaging, non-destructive testing, seismic
imaging or signal processing. In mathematical terms, an inverse problem
consists in the inversion of a linear or nonlinear map 
\[
T\colon X\to Y,\qquad x\mapsto T(x),
\]
which models how the quantity of interest $x$ belonging to a space
$X$ is related to the measurements $y=T(x)$ in the space $Y$. The
reader is referred to the many books on inverse problems for a comprehensive
exposition (see, e.g., \cite{isakov-2006,ammari-2008,tarantola-2009,2011-kirsch,HANDBOOK-MMI2-2015,alberti-capdeboscq-2016,2017-hasanov-romanov,ammari-2017}).

Inverse problems can be ill-posed: the map $T$ may not be injective
(i.e., two different $x_{1}$ and $x_{2}$ may correspond to the same
measurement $T(x_{1})=T(x_{2})$) or, when injective, $T^{-1}\colon\ran T\subseteq Y\to X$
may not be continuous (i.e., two different and not close $x_{1}$
and $x_{2}$ may correspond to almost identical measurements $T(x_{1})\approx T(x_{2})$).
Various strategies have been introduced to tackle the issue of inversion
in this setting, Tykhonov regularisation being the most famous method
\cite{1996-regularization,2008-kaltenbacher-neubauer-scherzer}.

Our purpose is to investigate the role of randomisation in the resolution
of inverse problems. By randomisation, we refer to the use of random
measurements, or to the case of random unknowns. We do not mean the
well-established statistical approaches in inverse problems, as in
the Bayesian framework, where probability is used to assess the reliability
of the reconstruction.

Even with this distinction, the wording ``randomisation'' may refer
to many different concepts in the framework of inverse problems. A
huge literature is devoted to the issue of randomisation in the measuring
process: the unknown $x$ in $X$ is fixed and deterministic, and
we choose the measurements randomly according to some suitable distribution.
For example, compressed sensing \cite{FR} and passive imaging with
ambient noise \cite{garnier-2016} belong to this class. Another very
popular idea consists in randomising the unknown: in this scenario,
we try to recover \textit{most} unknowns $x$ in $X$, according to
some distribution. For example, this is typically the situation when
deep learning \cite{Goodfellow-et-al-2016} is applied to inverse
problems. We briefly review these instances in Appendix~\ref{sec:rev}.

This article deals with a more recent approach, in the framework of
observability or control theory: the so-called randomised observability
constants \cite{PTZ1,PTZparab,PTZobsND}. We argue that, in contrast
with the initial motivation for its introduction, the randomised observability
constant is not necessarily indicative of the likelihood for randomised
unknowns to be observable.

In the next section, we recall the notion of randomised observability
constant and comment on its use in optimal design problems. In section~\ref{sec:randomAbstract},
we reformulate the randomised observability constant in an abstract
setting as randomised stability constant. In section~\ref{sec:propcstab}
we show that when the classical (deterministic) stability constant
is null, a positive randomised stability constant need not imply,
as one could hope, that the inverse problem can be solved for most
unknowns. In the course of our study, we make several observations
on the properties of the randomised stability constant. Section~\ref{sec:conclusion}
contains several concluding remarks and discusses possible future
directions.

\section{Randomising initial data of PDEs\label{Sec:revML}}

In this section we briefly review the randomised observability constant
introduced in \cite{PTZ1,PTZparab,PTZobsND} and its main properties.
This was motivated by the original idea of randomisation by Paley
and Zygmund, which we now briefly discuss.

\subsection{On randomisation processes}

\label{sec:paley} In order to understand the principle of randomisation
that will be at the core of this paper, it is useful to recall the
historical result by Paley and Zygmund on Fourier series. Let $(c_{n})_{n\in\Z}$
be an element of $\ell^{2}(\C)$ and $f$ be the Fourier series given
by 
\[
f:\T\ni\theta\mapsto\sum_{n\in\Z}c_{n}e^{in\theta},
\]
where $\T$ denotes the torus $\R/(2\pi)$. According to the so-called
Parseval identity, the function $f$ belongs to $L^{2}(\T)$; furthermore,
the coefficients $c_{n}$ can be chosen in such a way that $f$ does
not belong to any $L^{q}(\T)$ for $q>2$. Some of the results obtained
by Paley and Zygmund (see \cite{paley_zygmund_1930,paley_1930,paley_zygmund_1932})
address the regularity of the Fourier series $f$. They show that
if one changes randomly and independently the signs of the Fourier
coefficients $c_{n}$, then the resulting random Fourier series belongs
almost surely to any $L^{q}(\T)$ for $q>2$. More precisely, introducing
a sequence $(\beta_{n}^{\nu})_{n\in\Z}$ of independent Bernoulli
random variables on a probability space $(A,\mathcal{A},\mathds{P})$
such that 
\[
\mathds{P}(\beta_{n}^{\nu}=\pm1)=\frac{1}{2},
\]
then, the Fourier series $f^{\nu}$ given by 
\[
f^{\nu}:\T\ni\theta\mapsto\sum_{n\in\Z}\beta_{n}^{\nu}c_{n}e^{in\theta}
\]
belongs almost surely to $L^{q}(\T)$ for all $q<+\infty$.

In \cite{BurqTzvetkov}, the effect of the randomisation on the initial
data of solutions to dispersive equations has been investigated. In
that case, initial data and solutions of the PDE considered are expanded
in a Hilbert basis of $L^{2}(\Omega)$ made of eigenfunctions $(\phi_{j})_{j\geq1}$
of the Laplace operator. The randomisation procedure consists hence
in multiplying all the terms of the series decomposition by well-chosen
independent random variables. In particular, regarding for instance
the homogeneous wave equation with Dirichlet boundary conditions,
one can show that for all initial data $(y^{0},y^{1})\in H_{0}^{1}(\Omega)\times L^{2}(\Omega)$,
the Bernoulli randomisation keeps the $H_{0}^{1}\times L^{2}$ norm
constant. It is observed that many other choices of randomisation
are possible. For instance a positive effect of the randomisation
can be observed by considering independent centred Gaussian random
variables with variance 1 (Gaussian randomisation). Indeed, it allows
to generate a dense subset of the space of initial data $H_{0}^{1}(\Omega)\times L^{2}(\Omega)$
through the mapping 
\[
R_{(y^{0},y^{1})}\colon A\to H_{0}^{1}(\Omega)\times L^{2}(\Omega),\qquad\nu\mapsto(y_{\nu}^{0},y_{\nu}^{1}),
\]
where %$A=\{\left(\beta_{n}\right)_{n\in\mathbb{N}}:\beta_{n}=1\text{ or }-1\}$},
$(y_{\nu}^{0},y_{\nu}^{1})$ denotes the pair of randomised initial
data, provided that all the coefficients in the series expansion of
$(y^{0},y^{1})$ are nonzero. Several other properties of these randomisation
procedures are also established in \cite{BurqTzvetkov}.

\subsection{Randomised observability constant \label{sec:randomObsCst}}

We now review how randomisation appeared in the framework of inverse
problems involving an observability inequality. The property of observability
of a system is related to the following issue: how to recover the
solutions of a PDE from the knowledge of partial measurements of the
solutions. In what follows, we will concentrate on wave models, having
in particular photoacoustic/thermoacoustic tomography imaging in mind.

Let $T>0$ and $\Omega\subseteq\R^{d}$ be a bounded Lipschitz domain
with outer unit normal $\n$. We consider the homogeneous wave equation
with Dirichlet boundary conditions 
\begin{equation}
\left\{ \begin{array}{ll}
\partial_{tt}y(t,x)-\Delta y(t,x)=0 & \quad(t,x)\in[0,T]\times\Omega,\\
y(t,x)=0 & \quad(t,x)\in[0,T]\times\partial\Omega.
\end{array}\right.\label{waveEqobs}
\end{equation}
It is well known that, for all $(y^{0},y^{1})\in H_{0}^{1}(\Omega)\times L^{2}(\Omega)$,
there exists a unique solution $y\in C^{0}([0,T],H_{0}^{1}(\Omega))\cap C^{1}((0,T),L^{2}(\Omega))$
of \eqref{waveEqobs} such that $y(0,x)=y^{0}(x)$ and $\partial_{t}y(0,x)=y^{1}(x)$
for almost every $x\in\Omega$. Let $\Gamma$ be a measurable subset
of $\partial\Omega$, representing the domain occupied by some sensors,
which take some measurements over a time horizon $[0,T]$.

The inverse problem under consideration reads as follows. 
\begin{quote}
\label{eq:ip} \textbf{Inverse problem:} reconstruct the initial condition
$(y^{0},y^{1})$ from the knowledge of the partial boundary measurements
\[
\one_{\Gamma}(x)\frac{\partial y}{\partial\n}(t,x),\qquad(t,x)\in[0,T]\times\partial\Omega.
\]
\end{quote}
To solve this problem, we introduce the so-called \textit{observability
constant}: $C_{T}(\Gamma)$ is defined as the largest non-negative
constant $C$ such that 
\begin{equation}
C\Vert(y(0,\cdot),\partial_{t}y(0,\cdot))\Vert_{H_{0}^{1}(\Omega)\times L^{2}(\Omega)}^{2}\leq\int_{0}^{T}\int_{\Gamma}\left|\frac{\partial y}{\partial\n}(t,x)\right|^{2}\,d\Hn\,dt,\label{ineqobs}
\end{equation}
for any solution $y$ of \eqref{waveEqobs}, where $H_{0}^{1}(\Omega)$
is equipped with the norm $\|u\|_{H_{0}^{1}(\Omega)}=\|\nabla u\|_{L^{2}(\Omega)}$.
Then, the aforementioned inverse problem is well-posed if and only
if $C_{T}(\Gamma)>0$. In such a case, we will say that observability
holds true in time $T$. Moreover, observability holds true within
the class of $\mathcal{C}^{\infty}$ domains $\Omega$ if $(\Gamma,T)$
satisfies the \textit{Geometric Control Condition (GCC)} (see \cite{BLR}),
and this sufficient condition is almost necessary.

Let us express the observability constant more explicitly. Fix an
orthonormal basis (ONB) $(\phi_{j})_{j\geq1}$ of $L^{2}(\Omega)$
consisting of (real-valued) eigenfunctions of the Dirichlet-Laplacian
operator on $\Omega$, associated with the negative eigenvalues $(-\lambda_{j}^{2})_{j\geq1}$.
Then, any solution $y$ of \eqref{waveEqobs} can be expanded as 
\begin{equation}
y(t,x)=\sum_{j=1}^{+\infty}y_{j}(t)\phi_{j}(x)={\frac{1}{\sqrt{2}}}\sum_{j=1}^{+\infty}\left(\frac{a_{j}}{{\lambda_{j}}}e^{i\lambda_{j}t}+\frac{b_{j}}{{\lambda_{j}}}e^{-i\lambda_{j}t}\right)\phi_{j}(x),\label{yDecomp_intro}
\end{equation}
where the coefficients $a_{j}$ and $b_{j}$ account for initial data.
More precisely, we consider the ONB of $H_{0}^{1}(\Omega)\times L^{2}(\Omega)$
given by $\{\psi_{j}^{+},\psi_{j}^{-}:j\ge1\}$, where 
\begin{equation}
\psi_{j}^{+}=\frac{1}{\sqrt{2}}(\frac{\phi_{j}}{\lambda_{j}},i\phi_{j}),\qquad\psi_{j}^{-}=\frac{1}{\sqrt{2}}(\frac{\phi_{j}}{\lambda_{j}},-i\phi_{j}).\label{eq:ONBL2}
\end{equation}
Expanding now the initial data with respect to this basis we can write
$(y^{0},y^{1})=\sum_{j=1}^{+\infty}a_{j}\psi_{j}^{+}+b_{j}\psi_{j}^{-}$,
namely, 
\[
y^{0}=\sum_{j=1}^{+\infty}\frac{a_{j}+b_{j}}{\sqrt{2}\,\lambda_{j}}\,\phi_{j},\qquad y^{1}=\sum_{j=1}^{+\infty}i\frac{a_{j}-b_{j}}{\sqrt{2}}\,\phi_{j}.
\]
The corresponding solution to \eqref{waveEqobs} is given by \eqref{yDecomp_intro}.
In addition, Parseval's identity yields $\|(y^{0},y^{1})\|_{H_{0}^{1}(\Omega)\times L^{2}(\Omega)}^{2}=\sum_{j=1}^{+\infty}|a_{j}|^{2}+|b_{j}|^{2}$.

Then, the constant $C_{T}(\Gamma)$ rewrites 
\[
\begin{split}C_{T}(\Gamma) & =\inf_{\substack{(a_{j}),(b_{j})\in\ell^{2}(\mathds{C})\\
\sum_{j=1}^{+\infty}(|a_{j}|^{2}+|b_{j}|^{2})=1
}
}\int_{0}^{T}\int_{\Gamma}\left|\frac{\partial y}{\partial\n}(t,x)\right|^{2}\,d\Hn\,dt,\end{split}
\]
where $y(t,x)$ is given by \eqref{yDecomp_intro}.

\medskip{}

The constant $C_{T}(\Gamma)$ is {deterministic} and takes into
account any $(a_{j}),(b_{j})\in\ell^{2}(\mathds{C})$, including the
worst possible cases. Interpreting $C_{T}(\Gamma)$ as a quantitative
measure of the well-posed character of the aforementioned inverse
problem, one could expect that such worst cases do not occur too often;
thus it would appear desirable to consider a notion of observation
in average.

Motivated by the findings of Paley and Zygmund (see $\S$\ref{sec:paley})
and its recent use in another context \cite{BurqICM,BurqTzvetkov},
making a random selection of all possible initial data for the wave
equation \eqref{waveEqobs} consists in replacing $C_{T}(\Gamma)$
with the so-called \emph{randomised observability constant} defined
by 
\begin{equation}
C_{T,\mathrm{rand}}(\Gamma)=\hspace{-0.5cm}\inf_{\substack{(a_{j}),(b_{j})\in\ell^{2}(\mathds{C})\\
\sum_{j=1}^{+\infty}(|a_{j}|^{2}+|b_{j}|^{2})=1
}
}\hspace{-0.2cm}\mathds{E}\left(\int_{0}^{T}\int_{\Gamma}\left|\frac{\partial y^{\nu}}{\partial\n}(t,x)\right|^{2}\hspace{-0.1cm}d\Hn\,dt\right),\label{CTrand}
\end{equation}
where 
\begin{equation}
y^{\nu}(t,x)={\frac{1}{\sqrt{2}}}\sum_{j=1}^{+\infty}\left(\frac{\beta_{1,j}^{\nu}a_{j}}{{\lambda_{j}}}e^{i\lambda_{j}t}+\frac{\beta_{2,j}^{\nu}b_{j}}{{\lambda_{j}}}e^{-i\lambda_{j}t}\right)\phi_{j}(x)\label{def:ynu}
\end{equation}
and $(\beta_{1,j}^{\nu})_{j\in\N}$ and $(\beta_{2,j}^{\nu})_{j\in\N}$
are two sequences of independent random variables of Bernoulli or
Gaussian type, on a probability space $(A,\mathcal{A},\mathds{P})$
with mean 0 and variance 1. Here, $\mathds{E}$ is the expectation
in the probability space, and runs over all possible events $\nu$.
In other words, we are randomising the Fourier coefficients $\{a_{j},b_{j}\}_{j\ge1}$
of the initial data $(y^{0},y^{1})$ with respect to the basis $\{\psi_{j}^{\pm}\}_{j\ge1}$.

The randomised observability constant was introduced in \cite{PTZ1,PTZparab,PTZobsND,MR3502963,PTZobsbound}. It can be expressed in terms of
deterministic quantities (see \cite[Theorem~2.2]{PTZobsND}). 
\begin{proposition}
\label{propHazardCst} Let $\Gamma\subset\partial\Omega$ be measurable.
We have 
\begin{equation}
C_{T,\mathrm{rand}}(\Gamma)={\frac{T}{2}}\inf_{j\in\N}\frac{1}{\lambda_{j}^{2}}\int_{\Gamma}\left(\frac{\partial\phi_{j}}{\partial\n}(x)\right)^{2}\,d\Hn.
\end{equation}
 
\end{proposition}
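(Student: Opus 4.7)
The plan is to compute the expectation in \eqref{CTrand} explicitly as a diagonal quadratic form in $(a_j,b_j)$, and then optimise. Write $y^{\nu}$ via \eqref{def:ynu}, apply $\partial/\partial\n$ termwise (justified by hidden regularity and density), so that
\[
\frac{\partial y^{\nu}}{\partial \n}(t,x)=\frac{1}{\sqrt{2}}\sum_{j\ge1}\frac{1}{\lambda_{j}}\Bigl(\beta_{1,j}^{\nu}a_{j}\,e^{i\lambda_{j}t}+\beta_{2,j}^{\nu}b_{j}\,e^{-i\lambda_{j}t}\Bigr)\frac{\partial\phi_{j}}{\partial\n}(x),
\]
expand $|\partial_{\n}y^{\nu}|^{2}$ as a double series, and exchange the expectation with the sum and with the integration over $(t,x)$ (all terms have non-negative real parts when paired with their conjugates, so Fubini-Tonelli applies to $\E|\partial_\n y^\nu|^2$).

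The key computation is that, by independence and the assumptions $\E[\beta_{i,j}^{\nu}]=0$, $\E[(\beta_{i,j}^{\nu})^{2}]=1$, one has the orthogonality relations
\[
\E\bigl[\beta_{i,j}^{\nu}\beta_{k,\ell}^{\nu}\bigr]=\delta_{ik}\delta_{j\ell},\qquad i,k\in\{1,2\},\;j,\ell\ge1.
\]
These kill \emph{all} off-diagonal contributions, including the cross terms with oscillatory factors $e^{\pm 2i\lambda_{j}t}$ that would otherwise appear when $j=\ell$ but $i\neq k$. The surviving diagonal terms are
\[
\E\Bigl|\tfrac{\partial y^{\nu}}{\partial \n}(t,x)\Bigr|^{2}=\frac{1}{2}\sum_{j\ge1}\frac{|a_{j}|^{2}+|b_{j}|^{2}}{\lambda_{j}^{2}}\Bigl(\frac{\partial\phi_{j}}{\partial\n}(x)\Bigr)^{2},
\]
which is independent of $t$. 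Integrating over $[0,T]\times\Gamma$ yields
\[
\E\Bigl(\int_{0}^{T}\!\!\int_{\Gamma}\Bigl|\tfrac{\partial y^{\nu}}{\partial \n}\Bigr|^{2}d\Hn\,dt\Bigr)=\frac{T}{2}\sum_{j\ge1}\frac{|a_{j}|^{2}+|b_{j}|^{2}}{\lambda_{j}^{2}}\int_{\Gamma}\Bigl(\frac{\partial\phi_{j}}{\partial\n}\Bigr)^{2}d\Hn.
\]

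Finally, the infimum of a non-negative weighted sum $\sum_{j}\mu_{j}\gamma_{j}$ over the simplex $\{\mu_j=|a_j|^2+|b_j|^2\ge 0,\ \sum_j \mu_j=1\}$ equals $\inf_{j}\gamma_{j}$, attained (or approached) by concentrating all mass at an index realising the infimum, giving the claimed formula. The only delicate point is the interchange of $\E$ and the termwise differentiation/integration; using Tonelli for the non-negative function $|\partial_{\n}y^{\nu}|^{2}$ together with the trace regularity of solutions of \eqref{waveEqobs} settles it, and the hardest part is really bookkeeping: verifying that the oscillatory cross-terms indeed vanish in expectation, rather than producing additional contributions depending on $t$.
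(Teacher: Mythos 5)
Your proof is correct and is essentially the paper's argument: the authors package the same variance computation (independence and zero mean killing all off-diagonal terms, including the $e^{\pm 2i\lambda_j t}$ cross terms between $\beta_{1,j}^{\nu}$ and $\beta_{2,j}^{\nu}$) as the abstract Lemma~\ref{lem:minimal}, and then simply evaluate $\|\one_{\Gamma}\partial_{\n}y_{j}^{\pm}\|_{L^{2}([0,T]\times\partial\Omega)}^{2}$ on the basis elements $\psi_{j}^{\pm}$, whereas you carry out the expansion directly in the wave setting and finish with the same simplex-minimisation step. The only nitpick is that your orthogonality relation should be stated with a complex conjugate, $\E\bigl[\beta_{i,j}^{\nu}\overline{\beta_{k,\ell}^{\nu}}\bigr]=\delta_{ik}\delta_{j\ell}$, which is what your computation actually uses.
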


\begin{proof}
In view of Lemma~\ref{lem:minimal} (see below), we have that 
\[
C_{T,\mathrm{rand}}(\Gamma)=\inf\{\|\one_{\Gamma}\partial_{\n}y_{j}^{+}\|_{L^{2}([0,T]\times\partial\Omega)}^{2},\|\one_{\Gamma}\partial_{\n}y_{j}^{-}\|_{L^{2}([0,T]\times\partial\Omega)}^{2}:j\ge1\},
\]
where $y_{j}^{\pm}$ is the solution to \eqref{waveEqobs} with initial
condition $\psi_{j}^{\pm}$. Thus 
\[
y_{j}^{\pm}(t,x)=\frac{1}{\sqrt{2}\,\lambda_{j}}e^{\pm i\lambda_{j}t}\phi_{j}(x),
\]
and in turn 
\[
\|\one_{\Gamma}\partial_{\n}y_{j}^{\pm}\|_{L^{2}([0,T]\times\partial\Omega)}^{2}=\frac{1}{2\lambda_{j}^{2}}\int_{[0,T]\times\Gamma}|e^{\pm i\lambda_{j}t}\partial_{\n}\phi_{j}(x)|^{2}\,dtdx,
\]
which leads to our thesis. 
\end{proof}
We have $C_{T,\mathrm{rand}}(\Gamma)\geq C_{T}(\Gamma)$ (see Proposition~\ref{prop:basis}
below). It has been noted in \cite{MR3502963} that the observability
inequality defining $C_{T,\mathrm{rand}}(\Gamma)$ is associated to
a deterministic control problem for the wave equation \eqref{waveEqobs},
where the control has a particular form but acts in the whole domain
$\Omega$.

Regarding $C_{T,\mathrm{rand}}(\Gamma)$, we refer to \cite[Section 4]{PTZobsbound}
for a discussion on the positivity of this constant. The authors show
that if $\Omega$ is either a hypercube or a disk, then $C_{T,\mathrm{rand}}(\Gamma)>0$
for every relatively non-empty open subset $\Gamma$ of $\partial\Omega$.
In particular, in some cases $C_{T}(\Gamma)=0$ while $C_{T,\mathrm{rand}}(\Gamma)>0$.
This raised hopes that, even if recovering \textit{all} unknowns is
an unstable process, recovering \textit{most} unknowns could be feasible,
since apparently most unknowns are observable. This heuristic argument,
mentioned amongst the motivations for the study of the optimisation
of the randomised observability constant, was not investigated further
in the aforementioned papers. This matter will be studied in the
following sections, dedicated more generally on the possible use of such a constant for investigating the well-posed character of general inverse problems.

\subsubsection*{Applications to optimal design problems}

A larger observability constant $C_{T}(\Gamma)$ in \eqref{ineqobs}
leads to a smaller Lipschitz norm bound of the inverse map. Therefore
$C_{T}(\Gamma)$ can be used as the quantity to maximise when searching
for optimal sensors' positions. However, this turns out to be somewhat
impractical. When implementing a reconstruction process, one has to
carry out in general a very large number of measures; likewise, when
implementing a control procedure, the control strategy is expected
to be efficient in general, but possibly not for all cases. Thus,
one aims at exhibiting an observation domain designed to be the best
possible in average, that is, over a large number of experiments.
Adopting this point of view, it appeared relevant to consider an \textit{average
over random initial data}. In \cite{PTZ1,PTZparab,PTZobsND}, the
best observation is modelled in terms of maximising a \textit{randomised
observability constant}, which coincides with $C_{T,\mathrm{rand}}(\Gamma)$
when dealing with the boundary observation of the wave equation.

When dealing with internal observation of the wave equation on a closed
manifold, it has been shown in \cite{HPT} that the related observability
constant reads as the minimum of two quantities: the infimum of the
randomised observability constants over every orthonormal eigenbasis
and a purely geometric criterion standing for the minimal average
time spent by a geodesic in the observation set.

However, one should keep in mind that a large randomised constant
may not be associated with a reconstruction method (see section~\ref{sec:propcstab}).

\section{The randomised stability constant for abstract inverse problems\label{sec:randomAbstract}}

It is convenient to generalise the construction of the previous section
to an abstract setting. In what follows, none of the arguments require
the precise form of the forward operator related to the wave equation,
as they rely solely on the structure of the randomised constant.

For the remainder of this paper, we let $X$ and $Y$ be separable
infinite-dimensional Hilbert Spaces, and $P\colon X\to Y$ be an injective
bounded linear operator.

If $P^{-1}\colon\operatorname{ran}P\to X$ is a bounded operator,
the inverse problem of finding $x$ from $P(x)$ can be solved in
a stable way for all $x\in X$, without the need for randomisation.
This can be measured quantitatively by the constant 
\[
\Cdet=\inf_{x\in X\setminus\left\{ 0\right\} }\frac{\left\Vert Px\right\Vert _{Y}^{2}}{\left\Vert x\right\Vert _{X}^{2}}>0.
\]
The smaller $\Cdet$ is, the more ill-conditioned the inversion becomes.

On the other hand, when $P^{-1}$ is unbounded, the situation is different,
and the inverse problem is ill-posed \cite{2011-kirsch,2017-hasanov-romanov}.
In this case, although the kernel of $P$ reduces to $\{0\}$, we
have 
\begin{equation}
\Cdet=\inf_{x\in X\setminus\left\{ 0\right\} }\frac{\left\Vert Px\right\Vert _{Y}^{2}}{\left\Vert x\right\Vert _{X}^{2}}=0.\label{eq:defnPnotobs}
\end{equation}

Examples of such maps abound. The example that motivated our study
was that introduced in section~\ref{sec:randomObsCst}, with $X=H_{0}^{1}(\Omega)\times L^{2}(\Omega)$,
$Y=L^{2}([0,T]\times\partial\Omega)$ and 
\[
P(y^{0},y^{1})=y|_{[0,T]\times\Gamma},
\]
where $\Gamma\subseteq\partial\Omega$ and $y$ is the solution to
\eqref{waveEqobs} with initial condition $(y^{0},y^{1})$: if $\Gamma$
is not large enough, $P$ is still injective but $P^{-1}$ is unbounded
\cite{2019-laurent-leautaud}. Any injective compact linear operator
satisfies \eqref{eq:defnPnotobs}.

Let us now introduce the \textit{randomised stability constant}, which
generalises the randomised observability constant to this general
setting. We consider the class of random variables introduced in the
last section. Choose an ONB $\e=\{\e_{k}\}_{k\in\N}$ of $X$ and
write $x=\sum_{k=1}^{\infty}x_{k}\e_{k}\in X$. We consider random
variables of the form 
\[
x^{\nu}=\sum_{k=1}^{\infty}\beta_{k}^{\nu}x_{k} \e_{k},
\]
where $\beta_{k}^{\nu}$ are i.i.d.\ complex-valued random variables
{on a probability space $(A,\mathcal{A},\mathds{P})$} with vanishing
mean and variance $1$, so that $\mathds{E}(|\beta_{k}^{\nu}|^{2})=1$ for every $k$. These include the Bernoulli and Gaussian
random variables considered in the previous section. It is worth observing that, in the case of Bernoulli random variables, we have $|\beta_{k}^{\nu}|^{2}=1$ for every $k$, so that
$\|x^{\nu}\|_{X}=\|x\|_{X}$.
\begin{definition}
\label{def:Crand}The \textit{randomised stability constant} is defined
as 
\[
\Crand=\inf_{x\in X\setminus\left\{ 0\right\} }\mathds{E}\left(\frac{\left\Vert P\left(x^{\nu}\right)\right\Vert _{Y}^{2}}{\left\Vert x\right\Vert _{X}^{2}}\right).
\]
 
\end{definition}

As in the previous section, this constant should represent stability
of the inverse problem for {\textit{m}ost} unknowns. By definition,
we have $\Crand\ge\Cdet$. In general, this is a strict inequality:
we will provide examples in section~\ref{sec:propcstab}. This can
be heuristically seen also by the following deterministic expression
for the randomised stability constant.
\begin{lemma}
\label{lem:minimal}There holds 
\begin{equation}
\Crand=\inf_{k}\left\Vert P\left(\e_{k}\right)\right\Vert _{Y}^{2}.\label{eq:crand-1}
\end{equation}
\end{lemma}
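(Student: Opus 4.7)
The plan is to reduce the computation of $\E\|P(x^\nu)\|_Y^2$ to a weighted sum of the quantities $\|P\e_k\|_Y^2$ by exploiting the orthogonality of the $\beta_k^\nu$ in $L^2(A,\P)$, and then optimise over $x$.

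First, writing $x=\sum_k x_k \e_k$, I would observe that the series $\sum_k \beta_k^\nu x_k\,\e_k$ converges in $L^2(A,\P;X)$ since the partial sums form a Cauchy sequence: for $M<N$,
\[
\E\Bigl\|\sum_{k=M+1}^N \beta_k^\nu x_k\,\e_k\Bigr\|_X^2=\sum_{k=M+1}^N |x_k|^2\,\E(|\beta_k^\nu|^2)=\sum_{k=M+1}^N|x_k|^2,
\]
which is the tail of a convergent series. Applying the bounded operator $P$ commutes with this $L^2(A,\P)$-limit, so $P(x^\nu)=\sum_k \beta_k^\nu x_k\,P\e_k$ in $L^2(A,\P;Y)$.

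Next I would compute
\[
\E\|P(x^\nu)\|_Y^2=\sum_{k,j}\E\bigl(\beta_k^\nu\overline{\beta_j^\nu}\bigr)\,x_k\overline{x_j}\,\langle P\e_k,P\e_j\rangle_Y,
\]
where Fubini is justified by the $L^2$ convergence just established. Independence of the $\beta_k^\nu$, together with the assumptions $\E(\beta_k^\nu)=0$ and $\E(|\beta_k^\nu|^2)=1$, yields $\E(\beta_k^\nu\overline{\beta_j^\nu})=\delta_{kj}$, and the double sum collapses to
\[
\E\|P(x^\nu)\|_Y^2=\sum_{k}|x_k|^2\,\|P\e_k\|_Y^2.
\]
Since $\|x\|_X^2=\sum_k|x_k|^2$ by Parseval, the ratio in Definition~\ref{def:Crand} is a convex combination of the numbers $\|P\e_k\|_Y^2$ with weights $|x_k|^2/\|x\|_X^2$.

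Finally, from this representation the inequality $\Crand\ge\inf_k\|P\e_k\|_Y^2$ is immediate, while the reverse inequality follows by testing against $x=\e_{k}$ for each $k\in\N$, giving $\Crand\le\|P\e_k\|_Y^2$ and hence $\Crand\le\inf_k\|P\e_k\|_Y^2$. The only step requiring any genuine care is the interchange of expectation and infinite summation, which is the main (mild) technical point; once the $L^2(A,\P;Y)$ convergence of $P(x^\nu)$ is in hand, everything else is a direct computation.
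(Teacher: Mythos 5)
Your proof is correct and follows essentially the same route as the paper's: expand $\E\left\Vert P(x^{\nu})\right\Vert_Y^2$ via the Hilbert-space inner product, use the independence, mean-zero and unit-variance assumptions to kill the cross terms and obtain $\sum_k |x_k|^2\left\Vert P(\e_k)\right\Vert_Y^2$, then test with $x=\e_k$ for the reverse inequality. The only difference is that you explicitly justify the interchange of expectation and infinite summation via $L^2(A,\mathcal{A},\P)$ convergence, a technical point the paper leaves implicit.
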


\begin{proof}
Since $\left(Y,\left\Vert \cdot\right\Vert _{Y}\right)$ is also a
Hilbert space, we find 
\begin{equation}
\begin{split}\left\Vert P\left(x^{\nu}\right)\right\Vert _{Y}^{2} & =\left\langle \sum_{k=1}^{\infty}\beta_{k}^{\nu}x_{k} P\left(\e_{k}\right),\sum_{l=1}^{\infty}\beta_{l}^{\nu}x_{l}P\left(\e_{k}\right)\right\rangle _{Y}\\
 & =\sum_{k=1}^{\infty}|\beta_{k}^{\nu}|^{2} |x_{k}|^{2}\left\Vert P\left(e_{k}\right)\right\Vert _{Y}^{2}+\sum_{\substack{k,l\\
k\neq l
}
}\beta_{k}^{\nu}\overline{\beta_{l}^{\nu}}x_{k}\overline{x_{l}}\left\langle P\left(\e_{k}\right),P\left(\e_{k}\right)\right\rangle_Y.
\end{split}
\label{eq:main}
\end{equation}
Since $\beta_{k}^{\nu}$ are i.i.d.\ with vanishing
mean and such that $\mathds{E}(|\beta_{k}^{\nu}|^{2})=1$, we obtain %should we write $\left(\beta_{l}^{\nu}x_{l}\right)^{*}$?
%whence 
\begin{equation}
\mathds{E}\left(\frac{\left\Vert P\left(x^{\nu}\right)\right\Vert _{Y}^{2}}{\left\Vert x\right\Vert _{X}^{2}}\right)=\frac{\mathds{E}\left(\left\Vert P\left(x^{\nu}\right)\right\Vert _{Y}^{2}\right)}{\sum_{k=1}^{\infty}|x_{k}|^{2}}=\frac{\sum_{k=1}^{\infty}|x_{k}|^{2}\left\Vert P\left(\e_{k}\right)\right\Vert _{Y}^{2}}{\sum_{k=1}^{\infty}|x_{k}|^{2}}\geq\inf_{k}\left\Vert P\left(\e_{k}\right)\right\Vert _{Y}^{2},\label{eq:lem1}
\end{equation}
which means $\Crand\ge\inf_{k}\left\Vert P\left(\e_{k}\right)\right\Vert _{Y}^{2}$.
Choosing $x=\e_{k}$, we obtain (\ref{eq:crand-1}). 
\end{proof}

\section{Exploiting $C_{\mathrm{rand}}$ in inverse problems \label{sec:propcstab}}

The aim of this section is to discuss the impact of the randomised
observability constant in inverse problems. In other words, we would
like to address the following question: how can the positivity of
$C_{\textrm{rand}}$ be exploited in the solution of an inverse ill-posed
problem? We will not fully address this issue, but rather provide
a few positive and negative partial results.

We remind the reader that the randomisation introduced in the last
section when \eqref{eq:defnPnotobs} holds, was based on the point
of view that the ratio in \eqref{eq:defnPnotobs} is not ``usually''
small, and that, hopefully, in most cases inversion ``should'' be
possible. It is worthwhile to observe that the subset of the $x\in X$
such that $\left\Vert Px\right\Vert _{Y}\geq c\left\Vert x\right\Vert _{X}$
for some fixed $c>0$ is never generic in $X$, since $\left\{ x\in X:\left\Vert Px\right\Vert _{Y}<c\left\Vert x\right\Vert _{X}\right\} $
is open and non empty by (\ref{eq:defnPnotobs}). This caveat in mind,
we nevertheless wish to test if some evidence can be given to support
our optimistic approach that in most cases, inversion should be possible.
\begin{proposition}
\label{prop:easy_deviation} For every $\epsilon>0$ and $x\in X$,
there exists $c>0$ such that 
\begin{equation}
\mathds{P}\left(\left\Vert Px^{\nu}\right\Vert _{Y}\geq c\left\Vert x\right\Vert _{X}\right)>1-\epsilon.\label{eq:easy_deviation}
\end{equation}
 
\end{proposition}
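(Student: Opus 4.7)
The plan is to reduce the statement to the almost sure positivity of the non-negative real random variable $W := \|Px^\nu\|_Y$. Indeed, once we know $\mathds{P}(W=0)=0$, the events $\{W \geq 1/n\}$ increase to $\{W > 0\}$, so by continuity of the probability measure from below $\mathds{P}(W \geq 1/n) \to 1$ as $n \to \infty$. Choosing $n$ large enough that $\mathds{P}(W \geq 1/n) > 1-\epsilon$ and then setting $c = 1/(n\|x\|_X)$ (for $x \neq 0$; the case $x=0$ is vacuous since both sides of \eqref{eq:easy_deviation} reduce to $0 \geq 0$) would yield the desired bound.

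It remains to establish $\mathds{P}(W=0) = 0$ for $x \neq 0$. First, $x^\nu = \sum_{k \geq 1} \beta_k^\nu x_k \mathrm{e}_k$ is a well-defined random element of $X$ because its partial sums form a Cauchy sequence in $L^2(A;X)$, by the same orthogonality computation that underlies the proof of Lemma~\ref{lem:minimal}. Second, since $P$ is injective and bounded, $W=0$ if and only if $x^\nu=0$ in $X$, which in turn is equivalent to $\beta_k^\nu x_k = 0$ for every $k$. Picking any index $k_0$ with $x_{k_0}\neq 0$ (which exists since $x \neq 0$), we conclude that $\{x^\nu = 0\} \subseteq \{\beta_{k_0}^\nu = 0\}$.

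The argument then closes with the observation that $\mathds{P}(\beta_{k_0}^\nu = 0)=0$ in the relevant cases: for Bernoulli random variables this event is even empty since $|\beta_k^\nu|=1$ surely (in fact, as already noted just after Definition~\ref{def:Crand}, one has $\|x^\nu\|_X = \|x\|_X$ surely), and for standard Gaussian variables it holds by absolute continuity. The (mildly) delicate point of the argument is thus this non-degeneracy of $\beta_k^\nu$ at the origin, which is the natural hypothesis in the Paley--Zygmund style randomisation considered throughout the paper. No quantitative deviation estimate on $W$ is needed: the mere absence of an atom at zero together with elementary measure continuity produces the positive threshold $c$, so the constant obtained this way is non-explicit and may a priori be very small, consistent with the cautionary remarks opening the section.
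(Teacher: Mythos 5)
Your proof is correct and follows essentially the same route as the paper's: the paper applies the Monotone Convergence Theorem to the indicators of $\{\|Px^\nu\|_Y \ge c_n\|x\|_X\}$ as $c_n\searrow 0$, which is exactly your continuity-from-below argument, with the injectivity of $P$ supplying the pointwise limit. If anything, you are slightly more careful than the paper, which asserts $\lim_n f_n(\nu)=1$ for \emph{every} $\nu$ and thereby glosses over the (probability-zero) possibility that $x^\nu=0$, a point you handle explicitly via the absence of an atom of $\beta_{k_0}^\nu$ at the origin.
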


\begin{proof}
Take $x\in X$. Define the real-valued map $g(c)=\mathds{P}\left(\left\Vert Px^{\nu}\right\Vert _{Y}\geq c\left\Vert x\right\Vert _{X}\right).$
Take a sequence $c_{n}\searrow0$. It is enough to show that 
\[
\lim_{n\to+\infty}g(c_{n})=1.
\]

We write 
\[
g\left(c_{n}\right)=\int_{A}f_{n}\left(\nu\right)d\mathds{P}(\nu),\text{ where }f_{n}\left(\nu\right)=\begin{cases}
1 & \text{if \ensuremath{\left\Vert Px^{\nu}\right\Vert _{Y}\geq c_{n}\left\Vert x\right\Vert _{X}},}\\
0 & \text{otherwise.}
\end{cases}
\]
Note that $f_{n}$ is monotone increasing and, since $\ker P=\left\{ 0\right\} $,
$\lim_{n\to\infty}f_{n}(\nu)=1$ for every $\nu$. Thus by the Monotone
Convergence Theorem, 
\[
\lim_{n\to\infty}\int_{A}f_{n}\left(\nu\right)d\mathds{P}(\nu)=\int_{A}\left(\lim_{n\to\infty}f_{n}\left(\nu\right)\right)d\mathds{P}(\nu)=1.\qedhere
\]
\end{proof}
This thus shows that, for a fixed $x$, our intuition is vindicated:
in the vast majority of cases, the inequality $\left\Vert Px^{\nu}\right\Vert _{Y}\geq c\left\Vert x\right\Vert _{X}$
holds true. This is true independently of $\Crand$; we now investigate whether the positivity of $\Crand$ may yield a stronger estimate.

\subsection{Large deviation inequalities}

The next step is to estimate the probability that, for a given $x\in X\setminus\{0\}$,
the square of the ratio $\frac{\left\Vert P\left(x^{\nu}\right)\right\Vert _{Y}}{\left\Vert x\right\Vert _{X}}$
used in Definition~\ref{def:Crand} is close to its mean value. The
large deviation result we could derive describes the deviation from
an upper bound to $\Crand$, namely the constant $\Krand$ defined
by 
\begin{equation}
\Krand=\sup_{k}\|P(\e_{k})\|_{Y}^{2}.\label{def:KT}
\end{equation}

\begin{theorem}[large deviation estimate]
\label{theoLDE} Assume that $Y=L^{2}(\Sigma,\mu)$, where $(\Sigma,S,\mu)$
is a measure space. Let $(\beta_{k}^{\nu})_{k\in\N}$ be a sequence
of independent random variables of Bernoulli type, on a probability
space $(A,\mathcal{A},\mathds{P})$ with mean 0 and variance 1. Let
$x\in X\setminus\{0\}$ and $x^{\nu}=\sum_{k=1}^{\infty}\beta_{k}^{\nu}\e_{k}x_{k}$.
Then, for every $\delta>0$ we have 
\[
\mathds{P}\left({\Vert Px^{\nu}\Vert_{Y}\ge\delta\left\Vert x^{\nu}\right\Vert _{X}}\right)\leq\exp\left(2-\frac{1}{e}\frac{\delta}{\sqrt{\Krand}}\right).
\]
\end{theorem}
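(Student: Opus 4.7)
My plan is to combine Markov's inequality applied to a high even moment of $\|Px^{\nu}\|_{Y}$ with the Khintchine inequality for scalar Rademacher sums, and then optimise over the order of the moment. To begin, since $\beta_{k}^{\nu}$ is of Bernoulli type with $|\beta_{k}^{\nu}|=1$ a.s., we have $\|x^{\nu}\|_{X}=\|x\|_{X}$ for every $\nu$, so the claim reduces to bounding $\mathds{P}(\|Px^{\nu}\|_{Y}\geq\delta\|x\|_{X})$; and for any integer $n\geq1$ to be chosen later, Markov's inequality gives
\[
\mathds{P}\bigl(\|Px^{\nu}\|_{Y}\geq\delta\|x\|_{X}\bigr)\leq\frac{\mathds{E}\|Px^{\nu}\|_{Y}^{2n}}{\delta^{2n}\|x\|_{X}^{2n}}.
\]

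To estimate the numerator I exploit the $L^{2}(\Sigma,\mu)$ structure. Setting $f_{k}=P\e_{k}$, I would apply Minkowski's integral inequality with exponent $n\geq1$ (to the non-negative integrand $F(\nu,\sigma)=|\sum_{k}\beta_{k}^{\nu}x_{k}f_{k}(\sigma)|^{2}$) in order to move the expectation inside the $\sigma$-integral:
\[
\bigl(\mathds{E}\|Px^{\nu}\|_{Y}^{2n}\bigr)^{1/n}\leq\int_{\Sigma}\Bigl(\mathds{E}\bigl|\sum_{k}\beta_{k}^{\nu}x_{k}f_{k}(\sigma)\bigr|^{2n}\Bigr)^{1/n}d\mu(\sigma).
\]
For each fixed $\sigma$ the inner expectation is a scalar Rademacher sum, controlled by Khintchine's inequality $\mathds{E}|\sum_{k}\beta_{k}^{\nu}a_{k}|^{2n}\leq\gamma_{n}(\sum_{k}|a_{k}|^{2})^{n}$ for an explicit constant $\gamma_{n}$. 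Substituting $a_{k}=x_{k}f_{k}(\sigma)$, integrating in $\sigma$, and using $\|f_{k}\|_{Y}^{2}\leq\Krand$ together with Parseval, I obtain
\[
\mathds{E}\|Px^{\nu}\|_{Y}^{2n}\leq\gamma_{n}\Krand^{n}\|x\|_{X}^{2n}.
\]

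Plugging back into Markov gives $\mathds{P}(\|Px^{\nu}\|_{Y}\geq\delta\|x\|_{X})\leq\gamma_{n}(\Krand/\delta^{2})^{n}$. Choosing the moment constant in the looser form $\gamma_{n}^{1/n}\sim2n$ (rather than the sharp Haagerup $\sim 2n/e$), this becomes $(2n\Krand/\delta^{2})^{n}=(\sqrt{2n\Krand}/\delta)^{2n}$; taking $2n$ to be an integer close to the optimiser $\delta/(e\sqrt{\Krand})$ converts the power into a sub-exponential term of the form $\exp(-\delta/(e\sqrt{\Krand}))$, with the prefactor $\exp(2)$ absorbing the slack coming both from the rounding of $n$ to an integer and from the weakened Khintchine constant.

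The main obstacle is controlling those constants. The sharp Khintchine inequality in fact yields a sub-Gaussian estimate $\exp(-c\delta^{2}/\Krand)$, which is stronger in $\delta$ than what is stated; to obtain exactly the linear form $\exp(2-\delta/(e\sqrt{\Krand}))$ one must deliberately loosen the scalar moment bound to $(\mathds{E}|\sum_{k}\beta_{k}^{\nu}a_{k}|^{p})^{1/p}\lesssim p(\sum_{k}|a_{k}|^{2})^{1/2}$, then select an integer $p$ near $\delta/(e\sqrt{\Krand})$ while carefully book-keeping the rounding so that the additive $2$ in the exponent suffices. This bookkeeping—rather than any analytic step—is the delicate part of the argument.
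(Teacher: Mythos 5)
Your plan is correct and follows essentially the same route as the paper: Markov's inequality on the $r$-th moment, Minkowski's integral inequality to reduce to pointwise scalar Rademacher moments, a Khintchine-type moment bound (which the paper derives from Hoeffding's inequality via the layer-cake formula, with the deliberately crude constant $C(r)<r^{r}$ you describe), and optimisation at $r=\delta/(e\sqrt{\Krand})$ with the factor $\exp(2)$ covering $r<2$. Your observation that the sharp Khintchine constant would yield a sub-Gaussian bound, and that the stated linear-in-$\delta$ exponent comes from the loosened constant, accurately reflects what the paper actually does.
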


The proof of Theorem \ref{theoLDE} is postponed to Appendix \ref{appendproof:LDE}.
The argument follows the same lines as the one of \cite[Theorem 2.1]{BurqICM}
and the general method introduced in \cite{BurqTzvetkov}.
\begin{remark}[Application to a wave system]
\label{rk:CTrandKTrand} Considering the wave equation \eqref{waveEqobs}
and adopting the framework of section~\ref{sec:randomObsCst} leads
to choose $\{\psi_{j}^{\pm}\}_{j\geq1}$ defined by \eqref{eq:ONBL2}
as the orthonormal basis $\mathrm{e}$. In that case, $X=H_{0}^{1}(\Omega)\times L^{2}(\Omega)$,
$\Sigma=[0,T]\times\partial\Omega$, $d\mu=dt\,d\Hn$ and $Y=L^{2}(\Sigma)$.
Following the discussion in section~\ref{sec:randomObsCst}, the
map $P$ is given by $P(y^{0},y^{1})=\one_{\Gamma}\frac{\partial y}{\partial\n}$,
where $y$ is the unique solution of \eqref{waveEqobs}. Further,
we have 
\begin{align*}
\Crand & =\frac{T}{2}\inf_{j\in\N}\frac{1}{\lambda_{j}^{2}}\int_{\Gamma}\left(\frac{\partial\phi_{j}}{\partial\n}\right)^{2}\,d\Hn,\\
K_{\text{rand}}\left(\mathrm{e}\right) & =\frac{T}{2}\sup_{j\in\N}\frac{1}{\lambda_{j}^{2}}\int_{\Gamma}\left(\frac{\partial\phi_{j}}{\partial\n}\right)^{2}\,d\Hn,
\end{align*}
where the first equality is given in Proposition~\ref{propHazardCst},
and the second one follows by applying the same argument.

Note that according to the so-called Rellich identity\footnote{This identity, discovered by Rellich in 1940 \cite{Rellich2}, reads
\[
2\lambda^{2}=\int_{\partial\Omega}\langle x,\nu\rangle\left(\frac{\partial\phi}{\partial\n}\right)^{2}\,d\Hn
\]
for every eigenpair $(\lambda,\phi)$ of the Laplacian-Dirichlet operator,
$\Omega$ being a bounded connected domain of $\R^{n}$ either convex
or with a $C^{1,1}$ boundary. }, we have $0<K_{\text{rand}}\left(\mathrm{e}\right)\leq\frac{T}{2}\operatorname{diam}(\Omega)$,
under additional mild assumptions on the domain $\Omega$. 
\end{remark}

The estimate given in Theorem~\ref{theoLDE} is on the ``wrong side'',
since we show that the ratio related to the inversion is much bigger
than $\Krand$ with low probability. The issue
is not the boundedness of $P$, which is given a priori, but of its inverse. This would correspond to a result of the type 
\begin{equation}
\mathds{P}\left(\frac{\|P(x^{\nu})\|_{Y}^{2}}{\|x\|_{X}^{2}}<\Crand-\delta\right)\le\text{small constant},\label{eq:false}
\end{equation}
namely, a quantification of the estimate given in Proposition~\ref{prop:easy_deviation},
uniform in $x$. If such a bound held, it would show that $\Crand$
is a reliable estimator of the behaviour of the ratio $\frac{\|P(x)\|_{Y}^{2}}{\|x\|_{X}^{2}}$
in general. Notice that, in the favourable case when $P^{-1}$ is  bounded, there exists $\delta_0\in [0,\Crand)$ such that
$$
\mathds{P}\left(\frac{\|P(x^{\nu})\|_{Y}^{2}}{\|x\|_{X}^{2}}<\Crand-\delta\right)=0
$$
for all $\delta\in [\delta_0,\Crand)$.
%We could then ....

In this general framework, estimate \eqref{eq:false} does not hold,
see Example~\ref{ex:inf}. Using a concentration inequality, a weaker
bound can be derived. 
\begin{proposition}
\label{pro:thm2} Assume that $Y=L^{2}(\Sigma,\mu)$, where $(\Sigma,S,\mu)$
is a measure space. Let $(\beta_{k}^{\nu})_{k\in\N}$ be a sequence
of independent random variables of Bernoulli type, on a probability
space $(A,\mathcal{A},\mathds{P})$ with mean 0 and variance 1. Let
$x\in X\setminus\{0\}$ and $x^{\nu}=\sum_{k=1}^{\infty}\beta_{k}^{\nu}\e_{k}x_{k}$.
Then, for every $\delta>0$ we have 
\begin{equation}
\mathbb{P}\left(\frac{\|P(x^{\nu})\|_{Y}^{2}}{\|x\|_{X}^{2}}-\mathbb{E}\left(\frac{\left\Vert P\left(x^{\nu}\right)\right\Vert _{Y}^{2}}{\left\Vert x\right\Vert _{X}^{2}}\right)<-\delta\right)<\exp\left(-\frac{\delta^{2}}{4\Krand^{2}}\right).\label{eq:lame}
\end{equation}
 
\end{proposition}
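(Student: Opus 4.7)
The plan is to realise $\|P(x^{\nu})\|_{Y}^{2}$ as a positive semidefinite quadratic form in the Bernoulli sequence $(\beta_{k}^{\nu})$ and then apply a one-sided Laurent--Massart style inequality. Starting from the expansion \eqref{eq:main} and using $|\beta_{k}^{\nu}|^{2}=1$,
\[
\|P(x^{\nu})\|_{Y}^{2} \;=\; \sum_{k,l}\beta_{k}^{\nu}\beta_{l}^{\nu}\,x_{k}\overline{x_{l}}\,\langle P(\e_{k}),P(\e_{l})\rangle_{Y} \;=\; \langle A\beta^{\nu},\beta^{\nu}\rangle_{\ell^{2}},
\]
where $A$ is the bounded Hermitian operator on $\ell^{2}$ with entries $A_{kl}=x_{k}\overline{x_{l}}\langle P(\e_{k}),P(\e_{l})\rangle_{Y}$. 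Since $A$ is the Gram matrix of $\{x_{k}P(\e_{k})\}_{k}$ in $Y$, it is positive semidefinite, and $\operatorname{Tr}(A)=\sum_{k}|x_{k}|^{2}\|P(\e_{k})\|_{Y}^{2}=\E[\|P(x^{\nu})\|_{Y}^{2}]$. The event in \eqref{eq:lame} is therefore exactly $\{\langle A\beta^{\nu},\beta^{\nu}\rangle-\operatorname{Tr}(A)<-\delta\|x\|_{X}^{2}\}$.

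By Cauchy--Schwarz and the definition \eqref{def:KT} of $\Krand$ one has $|\langle P(\e_{k}),P(\e_{l})\rangle_{Y}|\leq \Krand$, so
\[
\|A\|_{\mathrm{HS}}^{2} \;=\; \sum_{k,l}|x_{k}|^{2}|x_{l}|^{2}\bigl|\langle P(\e_{k}),P(\e_{l})\rangle_{Y}\bigr|^{2} \;\leq\; \Krand^{2}\,\|x\|_{X}^{4}.
\]
I would then invoke the one-sided sub-Gaussian concentration inequality for positive semidefinite quadratic forms in Rademacher variables, which is the $\pm 1$ analogue of the classical Laurent--Massart chi-squared bound, namely
\[
\mathds{P}\bigl(\langle A\beta^{\nu},\beta^{\nu}\rangle-\operatorname{Tr}(A)<-t\bigr) \;\leq\; \exp\!\left(-\frac{t^{2}}{4\|A\|_{\mathrm{HS}}^{2}}\right),\qquad t>0.
\]
This is obtained by a Chernoff argument: for $\lambda\geq 0$ one bounds $\E[\exp(-\lambda(\langle A\beta^{\nu},\beta^{\nu}\rangle-\operatorname{Tr}(A)))]\leq \exp(\lambda^{2}\|A\|_{\mathrm{HS}}^{2})$ by conditioning on the $\beta_{k}^{\nu}$ one coordinate at a time and using $\ln\cosh(s)\leq s^{2}/2$ together with the positive semidefiniteness that is preserved through the successive conditional reductions, and then optimises in $\lambda$. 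Substituting $t=\delta\|x\|_{X}^{2}$ and the estimate on $\|A\|_{\mathrm{HS}}$ delivers the stated bound.

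The main technical obstacle is the sub-Gaussian lower tail itself. A generic second-order Rademacher chaos obeys only the mixed Hanson--Wright bound $\exp(-c\min(t^{2}/\|A\|_{\mathrm{HS}}^{2},\,t/\|A\|_{\mathrm{op}}))$, so a uniform sub-Gaussian estimate is \emph{not} available for the upper deviation. What rescues the argument here is that one controls the lower deviation of a \emph{nonnegative} quadratic form; the delicate step is verifying the Chernoff--MGF estimate rigorously in the Bernoulli (as opposed to Gaussian) setting. Any version supplying the constant $4\Krand^{2}$ in the denominator yields the proposition.
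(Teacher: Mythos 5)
Your reduction of the statement to a lower-tail bound for the positive semidefinite quadratic form $\langle A\beta^{\nu},\beta^{\nu}\rangle$, with $A$ the Gram matrix of $\{x_{k}P(\e_{k})\}_{k}$, is correct, and so is the estimate $\|A\|_{\mathrm{HS}}^{2}\le\Krand^{2}\|x\|_{X}^{4}$; with the inequality you quote, the constants do come out to $\exp(-\delta^{2}/(4\Krand^{2}))$. The gap is that the whole argument rests on the asserted moment bound $\E\bigl[\exp\bigl(-\lambda(\langle A\beta^{\nu},\beta^{\nu}\rangle-\operatorname{Tr}A)\bigr)\bigr]\le\exp(\lambda^{2}\|A\|_{\mathrm{HS}}^{2})$ for all $\lambda\ge0$, and the sketch you offer for it does not go through. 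Integrating out $\beta_{1}^{\nu}$ first produces a factor $\cosh\bigl(2\lambda\textstyle\sum_{l>1}\mathrm{Re}(A_{1l})\beta_{l}^{\nu}\bigr)\le\exp\bigl(2\lambda^{2}(\sum_{l>1}\mathrm{Re}(A_{1l})\beta_{l}^{\nu})^{2}\bigr)$, which is again a quadratic form in the remaining variables, now appearing with a \emph{positive} sign in the exponent: the recursion does not close, and it is not explained how ``positive semidefiniteness preserved through the successive conditional reductions'' converts this into $\exp(\lambda^{2}\|A\|_{\mathrm{HS}}^{2})$. The bound you need is genuinely one-sided (for the upper tail the MGF of a Rademacher chaos diverges once $\lambda$ exceeds a multiple of $1/\|A\|_{\mathrm{op}}$, as you note), so no symmetric manipulation can deliver it; the asymmetric ingredient that makes the Gaussian case work is the explicit diagonalisation $\langle Ag,g\rangle=\sum_i a_i (g_i')^2$, which is unavailable for Bernoulli variables. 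As written, the proposal therefore proves the proposition modulo an unproven lemma that is at least as hard as the proposition itself.

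For comparison, the paper takes a more elementary route: since $(\beta_{k}^{\nu})^{2}=1$, the deviation equals the single sum $2\sum_{k<l}\beta_{k}^{\nu}\beta_{l}^{\nu}v_{kl}$ with $v_{kl}=x_{k}x_{l}\langle P(\e_{k}),P(\e_{l})\rangle_{Y}/\|x\|_{X}^{2}$, and the scalar Hoeffding bound of Proposition~\ref{prop:largeDevEst} is applied to the family $\alpha_{kl}^{\nu}=\beta_{k}^{\nu}\beta_{l}^{\nu}$ with $\sum_{k<l}(2v_{kl})^{2}\le2\Krand^{2}$ --- essentially your $2\|A\|_{\mathrm{HS}}^{2}/\|x\|_{X}^{4}$ without the diagonal terms. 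That step treats the products $\beta_{k}^{\nu}\beta_{l}^{\nu}$, $k<l$, as an independent family, which is only pairwise true, so your instinct that the off-diagonal chaos requires more care than a plain Hoeffding inequality is not unfounded; but to follow your route you must actually prove the lower-tail inequality for nonnegative Rademacher chaoses (e.g.\ by decoupling or hypercontractivity), not merely cite it.
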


This result is based on an appropriate Hoeffding inequality; its proof
is postponed to Appendix~\ref{appendproof:LDE}. Note that \eqref{eq:lame}
is not a large deviation result: the quantity under consideration
is bounded between 0 and 1, and the upper bound obtained is not small.
This is unavoidable, see Example~\ref{ex:inf}.

\subsection{Can you reconstruct two numbers from their sum?}

We collect here several observations that suggest that the positivity
of the randomised stability constant may not be helpful for solving
the inverse problems, not even for most unknowns.

\subsubsection{Instability arises for every $x\in X$}

We remind the reader why (\ref{eq:defnPnotobs}) renders inversion
unstable. Hypothesis (\ref{eq:defnPnotobs}) implies that there exists
a sequence $\left(x_{n}\right)_{n\in\N}$ such that 
\begin{equation}
\left\Vert x_{n}\right\Vert _{X}=1\text{ and }\left\Vert Px_{n}\right\Vert _{Y}<\frac{1}{n}\text{ for all }n\in\N.\label{eq:seqnxn}
\end{equation}
Suppose that our measurements are not perfect, and are affected by
a low level of noise $\delta$, $\left\Vert \delta\right\Vert _{Y}\leq\epsilon$,
with $\epsilon>0$. Then, for every $n$ such that $n\epsilon>1$,
we have 
\[
\left\Vert P\left(x+x_{n}\right)-P\left(x\right)\right\Vert _{Y}<\epsilon,
\]
hence $x$ and $x+x_{n}$ correspond to the same measured data, even
if $\left\Vert \left(x+x_{n}\right)-x\right\Vert _{X}=1$. This is
an unavoidable consequence of the unboundedness of $P^{-1}$, and
is true for every $x\in X$, even if the randomised stability constant
were positive (and possibly large).

\subsubsection{The dependence of $C_{\mathrm{rand}}$ on the basis\label{sec:CrandIntrin}}

Lemma~\ref{lem:minimal} shows that the ONB used to randomise our
input plays a role, as it appears explicitly in the formula \eqref{eq:crand-1}.
The following proposition underscores that point. Namely, if we consider
all possible randomisations with respect to all ONB of $X$ we recover
the deterministic stability constant $\Cdet$. 
\begin{proposition}
\label{prop:basis} We have 
\[
\inf_{\e}\Crand=\Cdet,
\]
where the infimum is taken over all ONB of $X$. In particular, if
$P^{-1}$ is unbounded then $\inf_{\e}\Crand=0$. 
\end{proposition}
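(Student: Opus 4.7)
The plan is a standard two-sided inequality, where each direction is quite short once Lemma~\ref{lem:minimal} is in hand.

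For the lower bound $\inf_{\e} \Crand \ge \Cdet$, I would fix an arbitrary ONB $\e = \{\e_k\}_{k\in\N}$ of $X$. Since each $\e_k$ is a unit vector in $X$, the definition of $\Cdet$ immediately gives $\|P(\e_k)\|_Y^2 \ge \Cdet$ for every $k$. Taking the infimum over $k$ and invoking Lemma~\ref{lem:minimal} yields $\Crand = \inf_k \|P(\e_k)\|_Y^2 \ge \Cdet$, and then taking the infimum over all ONB preserves this inequality.

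For the reverse inequality $\inf_{\e} \Crand \le \Cdet$, the strategy is to exhibit, for every $\epsilon>0$, an ONB whose first element nearly achieves $\Cdet$. Concretely, by the definition of $\Cdet$ as an infimum, pick a unit vector $u \in X$ with $\|Pu\|_Y^2 \le \Cdet + \epsilon$. Since $X$ is a separable Hilbert space, I can complete $\{u\}$ to an ONB $\e = \{u, \e_2, \e_3, \dots\}$ by choosing any ONB of the orthogonal complement $u^{\perp}$ (which is itself a separable Hilbert space). Applying Lemma~\ref{lem:minimal} to this particular basis gives $\Crand \le \|Pu\|_Y^2 \le \Cdet + \epsilon$, so $\inf_\e \Crand \le \Cdet + \epsilon$. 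Letting $\epsilon \to 0$ concludes the reverse inequality.

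Combining the two bounds yields $\inf_{\e} \Crand = \Cdet$. The final assertion is then immediate: when $P^{-1}$ is unbounded, equation \eqref{eq:defnPnotobs} gives $\Cdet = 0$, hence $\inf_{\e} \Crand = 0$.

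There is really no serious obstacle here; the only thing to be careful about is the completion-to-ONB step, which relies on the separability of $X$ to guarantee that $u^{\perp}$ admits a countable orthonormal basis. This is standard, so the proof is essentially a direct unpacking of Lemma~\ref{lem:minimal} together with the definitions.
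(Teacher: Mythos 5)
Your proof is correct and follows essentially the same route as the paper's: the lower bound comes from each basis vector being a unit vector together with Lemma~\ref{lem:minimal}, and the upper bound comes from completing a near-minimiser of the deterministic ratio to an ONB and applying Lemma~\ref{lem:minimal} again. The only cosmetic difference is that you phrase the upper bound with an $\epsilon$ argument while the paper uses a minimising sequence $x_n$; these are interchangeable.
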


\begin{proof}
By definition of $\Crand$, we have that $\Crand\ge\Cdet$ for every
ONB $\e$, and so it remains to prove that 
\[
\inf_{\e}\Crand\le\Cdet.
\]
By definition of $\Cdet$, we can find a sequence $x_{n}\in X$ such
that $\|x_{n}\|_{X}=1$ for every $n$ and $\|Px_{n}\|_{Y}^{2}\to\Cdet.$
For every $n$, complete $x_{n}$ to an ONB of X, which we call $\e^{(n)}$.
By Lemma~\ref{lem:minimal} we have $C_{\mathrm{rand}}(\e^{(n)})\le\|Px_{n}\|_{Y}^{2}$,
and so 
\[
\inf_{\e}\Crand\le\inf_{n}C_{\mathrm{rand}}(\e^{(n)})\le\inf_{n}\|Px_{n}\|_{Y}^{2}\le\lim_{n\to+\infty}\|Px_{n}\|_{Y}^{2}=\Cdet.\qedhere
\]
\end{proof}
This result shows that, in general, the randomised stability constant
strongly depends on the choice of the basis. There will always be
bases for which it becomes arbitrarily small when $P^{-1}$ is unbounded.

It is also worth observing that for compact operators, which arise
frequently in inverse problems, the randomised stability constant
is always zero.
\begin{lemma}
\label{lem:compact} If $P$ is compact then $C_{\mathrm{rand}}(\mathrm{e})=0$
for every ONB $\mathrm{e}$ of $X$. 
\end{lemma}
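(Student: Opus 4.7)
The plan is to combine the deterministic expression from Lemma~\ref{lem:minimal} with the standard fact that compact operators send weakly null sequences to strongly null sequences. By Lemma~\ref{lem:minimal} we have
\[
C_{\mathrm{rand}}(\mathrm{e}) = \inf_{k} \|P(\mathrm{e}_k)\|_Y^2,
\]
so it suffices to exhibit a subsequence of $(\mathrm{e}_k)_k$ along which $\|P(\mathrm{e}_k)\|_Y \to 0$.

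First I would observe that, since $\mathrm{e} = \{\mathrm{e}_k\}_{k\in\N}$ is an ONB of the separable Hilbert space $X$, Bessel's inequality gives $\sum_k |\langle \mathrm{e}_k, v\rangle|^2 \le \|v\|_X^2$ for every $v \in X$, so in particular $\langle \mathrm{e}_k, v\rangle \to 0$ as $k \to \infty$. Hence $\mathrm{e}_k \rightharpoonup 0$ weakly in $X$. Since $P$ is compact, it is completely continuous, and therefore $P(\mathrm{e}_k) \to 0$ strongly in $Y$.

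Combining these two facts, $\|P(\mathrm{e}_k)\|_Y^2 \to 0$, and therefore
\[
C_{\mathrm{rand}}(\mathrm{e}) = \inf_{k} \|P(\mathrm{e}_k)\|_Y^2 = 0.
\]
There is no real obstacle here; the only point worth stating carefully is the passage from weak to strong convergence under a compact operator, which is exactly the defining property of compactness for operators between Hilbert spaces.
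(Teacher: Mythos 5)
Your proof is correct and follows exactly the same route as the paper: Lemma~\ref{lem:minimal} reduces the claim to $\inf_k\|P(\mathrm{e}_k)\|_Y^2$, the ONB tends to zero weakly, and compactness of $P$ upgrades this to strong convergence of $P(\mathrm{e}_k)$. The only difference is that you spell out the Bessel-inequality justification for the weak convergence, which the paper takes for granted.
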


\begin{proof}
Since $\mathrm{e}_{k}$ tends to zero weakly in $X$, by the compactness
of $P$ we deduce that $P(\mathrm{e}_{k})$ tends to zero strongly
in $Y$. Thus, by Lemma~\ref{lem:minimal} we have 
\[
\Crand=\inf_{k}\left\Vert P\left(\e_{k}\right)\right\Vert _{Y}^{2}\leq\lim_{k\to+\infty}\left\Vert P\left(\e_{k}\right)\right\Vert _{Y}^{2}=0,
\]
as desired. 
\end{proof}

\subsubsection{Examples\label{sec:example}}

Let us now consider some examples. The first example is finite dimensional
and the kernel of the operator is not trivial. Note that the definition
of $\Crand$ and all results above, except {Proposition~\ref{prop:easy_deviation}
and} Lemma~\ref{lem:compact}, are valid also in this case, with
suitable changes due to the finiteness of the ONB.
\begin{example}
\label{ex:sum} Choose $X=\mathds{R}^{2}$ and $Y=\mathds{R}$, and
consider the map 
\begin{align*}
S\colon\mathds{R}^{2} & \to\mathds{R}\\
\left(x,y\right) & \mapsto x+y.
\end{align*}
The associated inverse problem can be phrased: find the two numbers
whose sum is given. This problem is ill-posed and impossible to solve.
The deterministic stability constant vanishes 
\[
\inf_{\left(x_{1},x_{2}\right)\in\mathds{R}^{2}\setminus\left\{ \left(0,0\right)\right\} }\frac{\left|x_{1}+x_{2}\right|^{2}}{x_{1}^{2}+x_{2}^{2}}=0,
\]
and $S^{-1}$ does not exist. However, the randomised constant obtained
using the canonical basis is positive. Indeed, $\left|P\left(1,0\right)\right|=\left|P\left(0,1\right)\right|=1$,
therefore 
\[
C_{\text{rand}}\left(\{\left(1,0\right),(0,1)\}\right)=\inf\{1,1\}=1.
\]
The positivity of this constant does not imply the existence of any
useful method to perform the reconstruction of $x$ and $y$ from
$x+y$, even for most $(x,y)\in\R^{2}$.

Had we chosen as orthonormal vectors $\frac{1}{\sqrt{2}}\left(1,1\right)$
and $\frac{1}{\sqrt{2}}\left(1,-1\right)$ , since $\left|P\left(1,-1\right)\right|=0$,
we would have found 
\[
C_{\text{rand}}\left(\frac{1}{\sqrt{2}}\left(1,1\right),\frac{1}{\sqrt{2}}\left(1,-1\right)\right)=0.
\]
\end{example}

One may wonder whether the features highlighted above are due to the
fact that the kernel is not trivial. That is not the case, as the
following infinite-dimensional generalisation with trivial kernel
shows.
\begin{example}
\label{ex:inf} Consider the case when $X=Y=\ell^{2}$, equipped with
the canonical euclidean norm. Let $\mathrm{e}=\{\mathrm{e}_{k}\}_{k=0}^{+\infty}$
denote the canonical ONB of $\ell^{2}$. Take a sequence $\left(\eta_{n}\right)_{n\in\N_{0}}$
such that $\eta_{n}>0$ for all $n$, and $\lim_{n\to\infty}\eta_{n}=0$.
We consider the operator $P$ defined by 
\[
P(\e_{2n})=\e_{2n}+\e_{2n+1},\qquad P(\e_{2n+1})=\e_{2n}+(1+\eta_{n})\e_{2n+1}.
\]
The operator $P$ may be represented with respect to the canonical
basis $\e$ by the block-diagonal matrix 
\[
P=\begin{bmatrix}1 & 1 & 0 & 0 & \cdots\\
1 & 1+\eta_{0} & 0 & 0 & \cdots\\
0 & 0 & 1 & 1 & \cdots\\
0 & 0 & 1 & 1+\eta_{1} & \cdots\\
\vdots & \vdots & \vdots & \vdots & \ddots
\end{bmatrix}.
\]
In other words, $P$ may be expressed as 
\[
P(x)=(x_{0}+x_{1},x_{0}+(1+\eta_{0})x_{1},x_{2}+x_{3},x_{2}+(1+\eta_{1})x_{3},\dots),\qquad x\in\ell^{2}.
\]

We note that $\ker T=\left\{ 0\right\} $ and that its inverse is
given by 
\[
P^{-1}(y)=((1+\eta_{0}^{-1})y_{0}-\eta_{0}^{-1}y_{1},\eta_{0}^{-1}(y_{1}-y_{0}),(1+\eta_{1}^{-1})y_{2}-\eta_{1}^{-1}y_{3},\eta_{1}^{-1}(y_{3}-y_{2}),\dots),
\]
which is an unbounded operator since $\eta_{n}^{-1}\to+\infty$. Given
the block diagonal structure of this map, the inversion consists of
solving countably many inverse problems (i.e., linear systems) of
the form 
\[
\left\{ \begin{array}{l}
x_{2n}+x_{2n+1}=y_{2n},\\
x_{2n}+(1+\eta_{n})x_{2n+1}=y_{2n+1}.
\end{array}\right.
\]
As soon as $\tilde{n}$ is such that $\eta_{\tilde{n}}$ becomes smaller
than the noise level, all the following inverse problems for $n\ge\tilde{n}$
are impossible to be solved, since they reduce to the ``sum of two
numbers'' discussed in Example~\ref{ex:sum}.

Note that 
\[
\|P(\e_{2n})\|_{2}^{2}=2,\qquad\|P(\e_{2n+1})\|_{2}^{2}=1+\left(1+\eta_{n}\right)^{2},
\]
therefore 
\[
\Crand=2.
\]
If we choose instead the rotated orthonormal basis, 
\[
v_{2n}=\frac{1}{\sqrt{2}}\left(\e_{2n}+\e_{2n+1}\right),\quad v_{2n+1}=\frac{1}{\sqrt{2}}\left(\e_{2n}-\e_{2n+1}\right),
\]
then $P\left(v_{2n+1}\right)=-\eta_{n}\frac{1}{\sqrt{2}}\e_{2n+1},$
and so 
\[
C_{\text{rand}}\left(\{v_{k}\}_{k}\right)=\inf_{k}\|P\left(v_{k}\right)\|_{2}^{2}\le\lim_{n\to\infty}\|P\left(v_{2n+1}\right)\|_{2}^{2}=0.
\]

We now turn to \eqref{eq:false} and \eqref{eq:lame}. For some $k\geq0$,
consider $x=\e_{2k}$. Then $P(x^{\nu})={\beta_{2k}^{\nu}}(\e_{2k}+\e_{2k+1})$
and therefore $\|P(x^{\nu})\|_{Y}=\|P(x)\|_{Y}$: there is no deviation
as 
\[
\frac{\|P(x^{\nu})\|_{Y}^{2}}{\|x\|_{X}^{2}}=\mathbb{E}\left(\frac{\|P(x^{\nu})\|_{Y}^{2}}{\|x\|_{X}^{2}}\right)=\Crand=2.
\]
Thus, the probabilities in \eqref{eq:false} and \eqref{eq:lame}
are equal to $0$, and the inequalities are trivial.

However, alternatively, consider $x=\e_{2k}+\e_{2k+1}$. Then $\|P(x^{\nu})\|_{Y}^{2}=4+(2+\eta_{2k})^{2}$
if $\beta_{1}^{\nu}\beta_{2}^{\nu}=1$ and $\|P(x^{\nu})\|_{Y}^{2}=\eta_{2k}^{2}$
if $\beta_{1}^{\nu}\beta_{2}^{\nu}=-1$. Therefore 
\[
\frac{\|P(x^{\nu})\|_{Y}^{2}}{\|x\|_{X}^{2}}-\Crand=\begin{cases}
\frac{(2+\eta_{2k})^{2}}{2} & \text{with probability \ensuremath{\frac{1}{2}},}\\
\frac{\eta_{2k}^{2}}{2}-2 & \text{with probability \ensuremath{\frac{1}{2}}.}
\end{cases}
\]
As a consequence, \eqref{eq:false} cannot be true in general for
every $x$. Similarly, we have 
\[
\frac{\|P(x^{\nu})\|_{Y}^{2}}{\|x\|_{X}^{2}}-\mathds{E}\left(\frac{\|P(x^{\nu})\|_{Y}^{2}}{\|x\|_{X}^{2}}\right)=\begin{cases}
2+\eta_{2k} & \text{with probability \ensuremath{\frac{1}{2}},}\\
-2-\eta_{2k} & \text{with probability \ensuremath{\frac{1}{2}},}
\end{cases}
\]
and the left-hand side of \eqref{eq:lame} can indeed be large for
some $x$.
\end{example}

It is worth observing that a very similar example is considered in
\cite{Maass2019} to show that particular complications may arise
when using neural networks for solving some inverse problems, even
naive and small scale (cfr.\ $\S$\ref{sec:deepL}).

These examples show that considering the observability constant for
a particular basis sheds little light on a potential stable inversion
of the problem in average, and that considering all possible randomisations
leads to the same conclusion as the deterministic case (confirming
Proposition~\ref{prop:basis}).

\subsection{Linear versus nonlinear problems}

The pitfalls we encountered when we tried to make use of the randomised
stability constant all stem from the linearity of the problems we
are considering. The seminal work of Burq and Tzvetkov \cite{BurqTzvetkov07},
which showed existence of solutions in super-critical regimes for
a semilinear problem did not involve tinkering with associated linear
operator (the wave equation); it is the nonlinearity that controlled
the critical threshold. In both compressed sensing and passive imaging
with random noise sources, nonlinearity plays a key role; further,
deep networks are nonlinear maps (cfr.\ Appendix~\ref{sec:rev}).

The naive intuition we discussed earlier, namely, that extreme situations
do not occur often, is more plausible for nonlinear maps where pathological
behaviour is local. 
\begin{example}
As a toy finite-dimensional example, consider the map 
\[
T\colon\R\to\R,\qquad T(x)=x(x-\epsilon)(x+\epsilon),
\]
for some small $\epsilon>0$. Then $T$ can be stably inverted outside
of a region with size of order $\epsilon$, since there the inverse
is continuous. Thus, a random initial data has little chance of falling
precisely in the problematic region. Such a case cannot happen with
linear maps. 
\end{example}

\begin{example}
Let $A\colon H\to H$ be an unbounded linear operator on a Hilbert
space with compact resolvent, so that the spectrum of $A$ is discrete.
Define the nonlinear map 
\[
T\colon H\times[0,1]\to H\times[0,1],\qquad T(x,\lambda)=(Ax+\lambda x,\lambda).
\]
Note that $A+\lambda I$ is invertible with probability $1$ if $\lambda$
is chosen uniformly in $[0,1]$. Thus, if $H\times[0,1]$ is equipped
with a product probability measure whose factor on $[0,1]$ is the
uniform probability, then $x$ may be reconstructed from $T(x)$ with
probability $1$.
\end{example}

\section{Concluding remarks\label{sec:conclusion}}

In this paper we focused on the randomised stability constant for
linear inverse problems, which we introduced as a generalisation of
the randomised observability constant.

We argue that, despite its intuitive and simple definition, the randomised
stability constant has no implications in the practical solution of
inverse problems, even for most unknowns. As the examples provided
show, this may be due to the linearity of the problem. With nonlinear
problems, the situation is expected to be completely different. It
could be that the randomised stability constant is meaningful in the
context of a nonlinear inversion process, involving for example a
hierarchical decomposition \cite{tadmor-2004,modin-nachman-rondi-2019},
but we do not know of results in that direction: this is left for
future research.

\bibliographystyle{plain}
\bibliography{random}

\appendix
%dummy comment inserted by tex2lyx to ensure that this paragraph is not empty

\section{Examples of techniques based on randomisation}

\label{sec:rev}

In this appendix we briefly review three different techniques for
solving inverse problems where randomisation plays a crucial role.
We do not aim at providing an exhaustive overview, or at reporting
on the most recent advances, or at discussing the many variants that
have been studied. The examples we present are used to contrast possible
different approaches, and the level of mathematical understanding
associated with them.

\subsection{Compressed sensing}

Since the seminal works \cite{CRT,2006-donoho}, compressed sensing
(CS) has provided a theoretical and numerical framework to overcome
Nyqvist criterion in sampling theory for the reconstruction of sparse
signals. In other words, sparse signals in $\C^{n}$ may be reconstructed
from $k$ discrete Fourier measurements, with $k$ smaller than $n$
and directly proportional to the sparsity of the signal (up to log
factors, see eqn.\ \eqref{eq:log} below). Let us give a quick overview
of the main aspects of CS, in order to show how it fits in the general
framework of section~\ref{sec:intro}. For additional details, the
reader is referred to the book \cite{FR}, and to the references therein.

Given $s\in\N=\{1,2,\dots\}$, let $X$ be the set of $s$-sparse
signals in $\C^{n}$, namely 
\[
X=\{x\in\C^{n}:\#\supp x\le s\}.
\]
Let $F\colon\C^{n}\to\C^{n}$ denote the discrete Fourier transform.
In fact, any unitary map may be considered, by means of the notion
of incoherence \cite{2007-candes}. In any case, the Fourier transform
is a key example for the applications to Magnetic Resonance Imaging
and Computerised Tomography (via the Fourier Slice Theorem). In order
to subsample the Fourier measurements, we consider subsets $S_{a}$
of cardinality $k$ of $\{1,2,\dots,n\}$ and parametrise them with
$a\in\{1,2,\dots,{\binom{n}{k}}\}$. Let $Y=\C^{k}$ and $P_{a}\colon\C^{n}\to\C^{k}$
be the projection selecting the entries corresponding to $S_{a}$.
We then define the measurement map 
\[
T_{a}=P_{a}\circ F\colon X\to\C^{k}.
\]
In other words, $T_{a}$ is the partial Fourier transform, since only
the frequencies in $S_{a}$ are measured, and $\#S_{a}=k\le n$.

Given an unknown signal $x_{0}\in X$, we need to reconstruct it from
the partial knowledge of its Fourier measurements represented by $y:=T_{a}(x_{0})$.
The sparsity of $x_{0}$ has to play a crucial role in the reconstruction,
since as soon as $k<n$ the map $P_{a}\circ F\colon\C^{n}\to\C^{k}$
necessarily has a non-trivial kernel. It is worth observing that sparsity
is a nonlinear condition: if $X$ were a linear subspace of $\C^{n}$,
the problem would be either trivial or impossible, depending on $\ker(P_{a}\circ F)\cap X$.
Thus nonlinearity plays a crucial role here.

The simplest reconstruction algorithm is to look for the sparsest
solution to $T_{a}x=y$, namely to solve the minimisation problem
\[
\min_{x\in\C^{n}}\|x\|_{0}\quad\text{subject to \ensuremath{T_{a}x=y},}
\]
where $\|x\|_{0}=\#\supp x$. However, this problem is NP complex,
and its direct resolution impractical. Considering the convex relaxation
$\|\cdot\|_{1}$ of $\|\cdot\|_{0}$ leads to a well-defined minimisation
problem 
\begin{equation}
\min_{x\in\C^{n}}\|x\|_{1}\quad\text{subject to \ensuremath{T_{a}x=y},}\label{eq:l1}
\end{equation}
whose solution may be easily found by convex optimisation (in fact,
by linear programming).

The theory of CS guarantees exact reconstruction. More precisely,
if $\tilde{x}$ is a minimiser of \eqref{eq:l1}, then $\tilde{x}=x_{0}$
with high probability, provided that 
\begin{equation}
k\ge Cs\log n,\label{eq:log}
\end{equation}
and that $a$ is chosen uniformly at random in $\{1,2,\dots,{\binom{n}{k}}\}$
(namely, the subset $S_{a}$ is chosen uniformly at random among all
the subsets of cardinality $k$ of $\{1,2,\dots,n\}$) \cite{CRT}.
In addition, in the noisy case, with measurements of the form $y=T_{a}(x_{0})+\eta$
where $\|\eta\|_{2}\le\epsilon$, by relaxing the equality ``$T_{a}x=y$''
to the inequality ``$\|T_{a}x-y\|_{2}\le\epsilon$'' in \eqref{eq:l1},
one obtains the linear convergence rate $\|x_{0}-\tilde{x}\|_{2}\le C\epsilon$,
namely, the solution is stable.

In summary, CS allows for the stable reconstruction of \textit{all}
sparse signals from partial Fourier measurements, for \textit{most}
choices of the measured frequencies. The corresponding forward map
$T_{a}\colon X\to\C^{k}$ is \textit{nonlinear}, simply because $X$
is not a vector space.

\subsection{Passive imaging with random noise sources}

The material presented in this part is taken from \cite{garnier-2016},
to which the reader is referred for more detailed discussion on this
topic.

A typical multistatic imaging problem is the recovery of some properties
of a medium with velocity of propagation $c(x)>0$ from some measurements
at locations $x_{j}\in\R^{3}$ of the solution $u(t,x)$ of the wave
equation 
\[
\partial_{t}^{2}u(t,x)-c(x)^{2}\Delta u(t,x)=f(t)\delta(x-y),\qquad(t,x)\in\R\times\R^{3},
\]
where $f(t)$ is the source pulse located at $y$. One of the major
applications of this setup is geophysical imaging, where one wants
to recover properties of the structure of the earth from measurements
taken on the surface. Generating sources in this context is expensive
and disruptive. Earthquakes are often used as sources, but they are
rare and isolated events. Yet, noisy signals, as they may be recorded
by seismographers, even if low in amplitude, may be relevant and useful
even in absence of important events.

The key idea is to consider the data generated by random sources (e.g.,
in seismology, those related to the waves of the sea). The equation
becomes 
\[
\partial_{t}^{2}u(t,x)-c(x)^{2}\Delta u(t,x)=n(t,x),\qquad(t,x)\in\R\times\R^{3},
\]
where the source term $n(t,x)$ is a zero-mean stationary random process
that models the ambient noise sources. We assume that its autocorrelation
function is 
\[
\mathds{E}(n(t_{1},y_{1})n(t_{2},y_{2}))=F(t_{2}-t_{1})K(y_{1})\delta(y_{1}-y_{2}),
\]
where $F$ is the time correlation function (normalised so that $F(0)=1$)
and $K$ characterises the spatial support of the sources. The presence
of $\delta(y_{1}-y_{2})$ makes the process $n$ delta-correlated
in space.

The reconstruction is based on the calculation of the empirical cross
correlation of the signals recorded at $x_{1}$ and $x_{2}$ up to
time $T$: 
\[
C_{T}(\tau,x_{1},x_{2})=\frac{1}{T}\int_{0}^{T}u(t,x_{1})u(t+\tau,x_{2})\,dt.
\]
Its expectation is the statistical cross correlation 
\[
\mathds{E}(C_{T}(\tau,x_{1},x_{2}))=C^{(1)}(\tau,x_{1},x_{2}),
\]
which is given by 
\begin{equation}
C^{(1)}(\tau,x_{1},x_{2})=\frac{1}{2\pi}\int_{\R\times\R^{3}}\hat{F}(\omega)K(y)\overline{\hat{G}(\omega,x_{1},y)}\hat{G}(\omega,x_{2},y)e^{-i\omega\tau}\,dtdy,\label{eq:C1}
\end{equation}
where $\hat{\cdot}$ denotes the Fourier transform in time and $G(t,x,y)$
is the time-dependent Green's function. Moreover, $C_{T}$ is a self-averaging
quantity, namely 
\[
\lim_{T\to+\infty}C_{T}(\tau,x_{1},x_{2})=C^{(1)}(\tau,x_{1},x_{2})
\]
in probability.

The role of randomised sources is now clear: from the measured empirical
cross correlation $C_{T}$ with large values of $T$ it is possible
to estimate, with high probability, the statistical cross correlation
$C^{(1)}$. Using \eqref{eq:C1}, from $C^{(1)}(\tau,x_{1},x_{2})$
it is possible to recover (some properties of) the Green function
$G$, which yield useful information about the medium, such as travel
times.

\subsection{Deep Learning in inverse problems for PDE}

\label{sec:deepL}

Convolutional Neural Networks have recently been used for a variety
of imaging and parameter reconstruction problems \cite{2019-haltmeier-book},
including Electrical Impedance Tomography (EIT) \cite{Martin2017,hamilton-2018,2019-eit-deep},
optical tomography \cite{SunFeng-2018}, inverse problems with internal
data \cite{berg-2017}, diffusion problems in imaging \cite{Arridge-2018},
computerised tomography \cite{2017-jin-etal,2018-bubba-etal}, photoacoustic
tomography \cite{2018-deep-pat,2018-hauptmann-etal} and magnetic
resonance imaging \cite{Zhu2018,2018-yang}. In the following brief
discussion, we decided to focus on inverse problems for partial differential
equations (PDE), and in particular on EIT, but similar considerations
are valid for most methods cited above.

Significant improvement has been observed in EIT with deep learning
compared to previous imaging approaches. Let $\Omega\subseteq\R^{d}$,
$d\ge2$, be a bounded Lipschitz domain with outer unit normal $\nu$.
The data in EIT is (a part of) of the Dirichlet-to-Neumann map 
\[
\Lambda_{\sigma}\colon\begin{array}[t]{rcl}
H^{\frac{1}{2}}\left(\partial\Omega\right)/\mathds{R} & \to & H^{-\frac{1}{2}}\left(\partial\Omega\right)/\mathds{R}\\
v & \mapsto & \left.\sigma\nabla u\cdot\nu\right|_{\partial\Omega}
\end{array}
\]
where $u(x)$ denotes the unique solution of the elliptic problem
\[
\left\{ \begin{array}{ll}
\operatorname{div}\left(\sigma(x)\nabla u(x)\right)=0 & x\in\Omega,\\
u(x)=v(x) & x\in\partial\Omega,
\end{array}\right.
\]
and $\sigma(x)>0$ is the unknown conductivity. The experimental data
is usually part of the inverse map, namely the Neumann-to-Dirichlet
map $\Lambda_{\sigma}^{-1}$. In two dimensions, provided that the
electrodes are equally separated on the unit disk, the data may be
modelled by 
\[
T_{N}\Lambda_{\sigma}^{-1}T_{N},
\]
where $T_{N}$ is the $L^{2}$ projection on span$\{\theta\mapsto\cos(n\theta):1\leq n\leq N\}$,
where $N$ is the number of electrodes: it is the partial Fourier
transform limited to the first $N$ coefficients.

Direct neural network inversion approaches suffer from drawbacks alike
direct non-regularised inversion attempts: the output is very sensitive
to measurement errors and small variations. Successful approaches
to Deep Learning EIT \cite{Martin2017,hamilton-2018,2019-eit-deep},
and to other parameter identification problems in \PDE, often involve
two steps.

The first step consists in the derivation of an approximate conductivity
$\sigma$ by a stable, albeit blurry, regularised inversion method.
For instance, in \cite{hamilton-2018} the ``D-bar'' equation is
used, while in \cite{Martin2017} a one-step Gauss-Newton method is
used. In both cases, the output of this step is a representation of
the conductivity coefficient, which depends on the inversion method
used. This first step is deterministic and its analysis is well understood.
The forward problem, relating the conductivity to the Dirichlet-to-Neumann
map, is nonlinear, independently of the inversion algorithm used.
Indeed, the map $\Lambda_{\sigma}$ is a linear operator, but $\sigma\mapsto\Lambda_{\sigma}$
is nonlinear.

The second, post-processing, step uses a neural network to ``deblur''
the image, and in fact restores details that were not identifiable
after the first step.

The second step is not unlike other successful usage of deep-learning
approaches for image classification; in general they are known to
be successful only with very high probability (and in turn for random
unknowns). More precisely, since the findings of \cite{szegedy2013intriguing},
deep networks are known to be vulnerable to so-called ``adversarial
perturbations'' (see the review article \cite{2018-akhtar-mian}
and the references therein). Given an image $x$ that is correctly
classified by the network with high confidence, an adversarial perturbation
is a small perturbation $p$ such that the images $x$ and $y=x+p$
are visually indistinguishable but the perturbed image $y$ is misclassified
by the network, possibly with high confidence too. State-of-the-art
classification networks are successful for the vast majority of natural
images, but are very often vulnerable to such perturbations.

These instabilities are not specific to image classification problems;
they appear in the same way in image reconstruction \cite{2019-vegard-etal}.
In this case, given an image that is well-reconstructed by the network,
it is possible to create another image that is visually indistinguishable
from the original one, but that is not well-reconstructed by the network.

A full mathematical understanding of deep networks is still lacking,
and the reasons of this phenomenon are not fully known. However, the
large Lipschitz constant of the network certainly plays a role, since
it is a sign of potential instability: in order for the network to
be effective, the weights of its linear steps need to be chosen large
enough, and the composition of several layers yields an exponentially
large constant.

\section{Proofs of the large deviation estimates\label{appendproof:LDE}}

The proofs of Theorem~\ref{theoLDE} and Proposition~\ref{pro:thm2}
rest upon a classical large deviation estimate, the so-called Hoeffding
inequality, see e.g. \cite[Prop. 2.2]{BurqICM}, whose proof is recalled
for the convenience of the reader. 
\begin{proposition}
\label{prop:largeDevEst} Let $(\alpha_{n}^{\nu})_{n\geq1}$ be a
sequence of sequence of independent random variables of Bernoulli
type, on a probability space $(A,\mathcal{A},\mathds{P})$ with mean
0 and variance 1. Then, for any $t>0$ and any sequence $(u_{n})_{n\geq1}\in\ell^{2}(\C)$,
we have 
\[
\mathds{P}\left(\sum_{n=1}^{+\infty}\alpha_{n}^{\nu}v_{n}<-t\right)=\mathds{P}\left(\sum_{n=1}^{+\infty}\alpha_{n}^{\nu}v_{n}>t\right)\leq\exp\left(-\frac{1}{2}\frac{t^{2}}{\sum_{n=1}^{+\infty}|v_{n}|^{2}}\right).
\]
 
\end{proposition}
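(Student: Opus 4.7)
The plan is to prove this by the classical Chernoff--Hoeffding technique: exponential Markov inequality combined with the Bernoulli moment generating function bound $\cosh x \le e^{x^{2}/2}$. Since the event $\sum_{n}\alpha_{n}^{\nu}v_{n}>t$ requires $v_{n}$ to be interpreted so that the partial sums are real, I will treat $(v_{n})\in\ell^{2}(\R)$ (the complex case reduces to this by splitting into real and imaginary parts and applying the union bound).

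First, I would dispose of the equality of the two probabilities by a symmetry argument: because $(\alpha_{n}^{\nu})_{n\ge 1}$ and $(-\alpha_{n}^{\nu})_{n\ge 1}$ have the same law (each $\alpha_{n}^{\nu}$ takes values $\pm 1$ with probability $1/2$), the laws of $S^{\nu}:=\sum_{n}\alpha_{n}^{\nu}v_{n}$ and $-S^{\nu}$ coincide. Hence $\mathds{P}(S^{\nu}>t)=\mathds{P}(S^{\nu}<-t)$, and it suffices to bound one of them.

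Next, for the upper bound, for any $\lambda>0$ apply the exponential Markov inequality:
\[
\mathds{P}(S^{\nu}>t)=\mathds{P}(e^{\lambda S^{\nu}}>e^{\lambda t})\le e^{-\lambda t}\,\mathds{E}\bigl(e^{\lambda S^{\nu}}\bigr).
\]
By independence of the $\alpha_{n}^{\nu}$, the expectation factorises:
\[
\mathds{E}\bigl(e^{\lambda S^{\nu}}\bigr)=\prod_{n=1}^{+\infty}\mathds{E}\bigl(e^{\lambda\alpha_{n}^{\nu}v_{n}}\bigr)=\prod_{n=1}^{+\infty}\cosh(\lambda v_{n}).
\]
Using the elementary inequality $\cosh(x)\le e^{x^{2}/2}$ (which follows from the termwise comparison of Taylor series $(2k)!\ge 2^{k}k!$), we obtain
\[
\mathds{E}\bigl(e^{\lambda S^{\nu}}\bigr)\le\exp\Bigl(\tfrac{\lambda^{2}}{2}\sum_{n=1}^{+\infty}|v_{n}|^{2}\Bigr).
\]
(Convergence of the infinite product is justified by $(v_{n})\in\ell^{2}$.)

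Finally, I would optimise in $\lambda>0$ the resulting bound
\[
\mathds{P}(S^{\nu}>t)\le\exp\Bigl(-\lambda t+\tfrac{\lambda^{2}}{2}\sum_{n=1}^{+\infty}|v_{n}|^{2}\Bigr),
\]
which is minimised by $\lambda^{\ast}=t/\sum_{n}|v_{n}|^{2}$ and yields the stated estimate $\exp\bigl(-\tfrac{1}{2}t^{2}/\sum_{n}|v_{n}|^{2}\bigr)$. There is no real obstacle here; the only mildly technical point is the bound $\cosh x\le e^{x^{2}/2}$, which is standard, and making precise that the infinite sum/product manipulations are justified by dominated convergence once one truncates to finite $N$ and passes to the limit.
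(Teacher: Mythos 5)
Your proposal is correct and follows essentially the same route as the paper: the exponential Markov/Chernoff bound, the moment generating function estimate $\mathds{E}(e^{\lambda\alpha_{n}^{\nu}v_{n}})\leq e^{\lambda^{2}v_{n}^{2}/2}$ obtained by termwise comparison of Taylor series (the paper writes out the series where you invoke $\cosh x\leq e^{x^{2}/2}$, but it is the same computation), and optimisation at $\lambda=t/\sum_{n}v_{n}^{2}$. Your explicit symmetry argument for the equality of the two tail probabilities and your remark on reducing the complex case to the real one are welcome clarifications of points the paper leaves implicit, but they do not change the method.
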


\begin{proof}
There holds $\mathbb{E}\left(\exp\left(\alpha_{n}^{\nu}v_{n}\right)\right)=\mathbb{E}\left(\sum_{k=0}^{\infty}\frac{\left(\alpha_{n}^{\nu}v_{n}\right)^{k}}{k!}\right)=\sum_{k=0}^{\infty}\frac{1}{k!}\mathbb{E}\left(\left(\alpha_{n}^{\nu}v_{n}\right)^{k}\right)$.
All odd powers of $k$ vanish as $\alpha_{n}^{\nu}$ has zero mean
and is symmetrical. Therefore for any $\lambda>0$ 
\[
\begin{split}\mathbb{E}\left(\exp\left(\lambda\alpha_{n}^{\nu}v_{n}\right)\right)%
 & =\sum_{k=0}^{\infty}\frac{1}{\left(2k\right)!}\mathbb{E}\left(\left(\lambda\alpha_{n}^{\nu}v_{n}\right)^{2k}\right)\\
 & \leq\sum_{k=0}^{\infty}\frac{\lambda^{2k}v_{n}^{2k}}{2^{k}\left(k!\right)}\\
 & =\exp\left(\frac{\lambda^{2}}{2}v_{n}^{2}\right).
\end{split}
\]
Applying Chernoff's inequality, we obtain for any $\lambda>0$ 
\begin{align*}
\mathbb{P}\left(\sum_{n=1}^{+\infty}\alpha_{n}^{\nu}v_{n}>t\right) & \le\mathbb{E}[\exp\left(\lambda\left(\sum_{n=1}^{+\infty}\alpha_{n}^{\nu}v_{n}\right)\right)]\exp\left(-\lambda t\right)\\
 & \leq\exp\left(\lambda^{2}\frac{1}{2}\sum_{n=1}^{+\infty}v_{n}^{2}-\lambda t\right).
\end{align*}
Choosing $\lambda=\frac{t}{\sum v_{n}^{2}}$ this yields 
\[
\mathbb{P}\left(\sum_{n=1}^{+\infty}\alpha_{n}^{\nu}v_{n}<-t\right)=\mathbb{P}\left(\sum_{n=1}^{+\infty}\alpha_{n}^{\nu}v_{n}>t\right)\leq\exp\left(-\frac{t^{2}}{2\sum v_{n}^{2}}\right),
\]
as desired. 
\end{proof}
We are now ready to prove Theorem~\ref{theoLDE}. 

\begin{proof}[Proof of Theorem~\ref{theoLDE}]
 Fix $r\geq2$ and set $\mathcal{Y}^{\nu}=\frac{P\left(x^{\nu}\right)}{\left\Vert x^{\nu}\right\Vert _{X}}$.
Markov's inequality yields 

\begin{equation}
\begin{split}\mathds{P}\left(\Vert\mathcal{Y}^{\nu}\Vert_{Y}\ge\delta\right) & =\mathds{P}\left(\Vert\mathcal{Y}^{\nu}\Vert_{Y}^{r}\ge\delta^{r}\right)\leq\frac{1}{\delta^{r}}\mathds{E}\left(\Vert\mathcal{Y}^{\nu}\Vert_{Y}^{r}\right).\end{split}
\label{metz1544}
\end{equation}

Let us denote by $\mathcal{L}_{\nu}^{r}$ the standard Lebesgue space
with respect to the probability measure $d\mathds{P}$. Recall that
$Y=L^{2}(\Sigma,{\mu})$. To provide an estimate of the right-hand
side, notice that 
\begin{equation}
\begin{split}\mathds{E}\left(\Vert\mathcal{Y}^{\nu}\Vert_{Y}^{r}\right) & =\int_{A}\Vert\mathcal{Y}^{\nu}\Vert_{Y}^{r}\,d\mathds{P}(\nu)\\
 & =\int_{A}\left(\int_{\Sigma}|\mathcal{Y}^{\nu}(s)|^{2}\,d{\mu(s)}\right)^{r/2}\,d\mathds{P}(\nu)\\
 & =\Big\Vert\int_{\Sigma}|\mathcal{Y}^{\nu}(s)|^{2}\,d{\mu(s)}\Big\Vert_{\mathcal{L}_{\nu}^{r/2}}^{r/2}\\
 & \leq\left(\int_{\Sigma}\big\Vert|\mathcal{Y}^{\nu}(s)|^{2}\big\Vert_{\mathcal{L}_{\nu}^{r/2}}\,d{\mu(s)}\right)^{r/2}\\
 & =\left(\int_{\Sigma}\big\Vert\mathcal{Y}^{\nu}(s)\big\Vert_{\mathcal{L}_{\nu}^{r}}^{2}\,d{\mu(s)}\right)^{r/2}\\
 & =\big\Vert s\mapsto\Vert\mathcal{Y}^{\nu}(s)\Vert_{\mathcal{L}_{\nu}^{r}}\big\Vert_{Y}^{r}
\end{split}
\label{metz1618}
\end{equation}
by using Jensen's inequality.

Furthermore, for a.e.\ $s\in\Sigma$, we have\footnote{Here, we use that if $X$ denotes a non-negative random variable and
$\varphi\colon\R_{+}\to\R_{+}$, then 
\[
\mathds{E}(\varphi(X))=\int_{0}^{+\infty}\varphi'(u)\mathds{P}(X>u)\,du.
\]
} 
\[
\Vert\mathcal{Y}^{\nu}(s)\Vert_{\mathcal{L}_{\nu}^{r}}^{r}=\int_{A}|\mathcal{Y}^{\nu}(s)|^{r}d\mathds{P}(\nu)=\int_{0}^{+\infty}ru^{r-1}\mathds{P}(|\mathcal{Y}^{\nu}(s)|>u)\,du
\]
and by using Proposition \ref{prop:largeDevEst} and the fact that
$\mathcal{Y}^{\nu}(s)$ reads 
\[
\mathcal{Y}^{\nu}(s)=\frac{\sum_{k=1}^{\infty}\beta_{k}^{\nu}x_{k}\left(P\e_{k}\right)(s)}{\sqrt{\sum_{k=1}^{\infty}|x_{k}|^{2}}},
\]
one gets 
\[
\Vert\mathcal{Y}^{\nu}(s)\Vert_{\mathcal{L}_{\nu}^{r}}^{r}\leq2\int_{0}^{+\infty}ru^{r-1}\exp\left(-\frac{1}{2}\frac{\sum_{k=1}^{\infty}|x_{k}|^{2}u^{2}}{\sum_{k=1}^{+\infty}|\left(P\e_{k}\right)(s)|^{2}|x_{k}|^{2}}\right)\,du.
\]
As a consequence, by using the change of variable 
\[
v=\frac{\sqrt{\sum_{k=1}^{\infty}|x_{k}|^{2}}u}{\sqrt{\sum_{k=1}^{+\infty}|\left(P\e_{k}\right)(s)|^{2}|x_{k}|^{2}}}
\]
we get 
\[
\Vert\mathcal{Y}^{\nu}(s)\Vert_{\mathcal{L}_{\nu}^{r}}^{r}\leq C(r)\left(\frac{\sum_{k=1}^{+\infty}|\left(P\e_{k}\right)(s)|^{2}|x_{k}|^{2}}{\sum_{k=1}^{\infty}|x_{k}|^{2}}\right)^{r/2}
\]
with 
\[
C(r)=2\int_{0}^{+\infty}rv^{r-1}e^{-\frac{1}{2}v^{2}}\,dv.
\]
An elementary computation yields $C(r)<r^{r}$. Therefore, 
\[
\Vert\mathcal{Y}^{\nu}(s)\Vert_{\mathcal{L}_{\nu}^{r}}<\left(r^{2}\frac{\sum_{k=1}^{+\infty}|\left(P\e_{k}\right)(s)|^{2}|x_{k}|^{2}}{\sum_{k=1}^{\infty}|x_{k}|^{2}}\right)^{1/2}.
\]
According to \eqref{metz1618}, we infer that 
\[
\begin{split}\mathds{E}\left(\Vert\mathcal{Y}^{\nu}\Vert_{Y}^{r}\right) & <\Bigg\Vert\left(r^{2}\frac{\sum_{k=1}^{+\infty}|\left(P\e_{k}\right)(\cdot)|^{2}|x_{k}|^{2}}{\sum_{k=1}^{\infty}|x_{k}|^{2}}\right)^{1/2}\Bigg\Vert_{Y}^{r}\\
 & =\Bigg\Vert r^{2}\frac{\sum_{k=1}^{+\infty}|\left(P\e_{k}\right)(\cdot)|^{2}|x_{k}|^{2}}{\sum_{k=1}^{\infty}|x_{k}|^{2}}\Bigg\Vert_{L^{1}(\Sigma)}^{r/2}.
\end{split}
\]
From \eqref{def:KT} and estimate \eqref{metz1544}, we get 
\[
\mathds{P}\left(\Vert\mathcal{Y}^{\nu}\Vert_{Y}\ge\delta\right)<\frac{1}{\delta^{r}}\left(r^{2}\frac{\sum_{k=1}^{+\infty}\Vert P\e_{k}\Vert_{Y}^{2}|x_{k}|^{2}}{\sum_{k=1}^{\infty}|x_{k}|^{2}}\right)^{r/2}\le\left(\frac{\Krand}{\delta^{2}}r^{2}\right)^{r/2},
\]
using the triangle inequality. Minimising the upper bound with respect
to $r$, that is, choosing 
\[
r^{2}=\frac{e^{-2}\delta^{2}}{{\Krand}}
\]
in the inequality above, one finally obtains 
\begin{equation}
\mathds{P}\left(\Vert\mathcal{Y}^{\nu}\Vert_{Y}\ge\delta\right)\leq\exp\left(-\frac{e^{-1}\delta}{\sqrt{{\Krand}}}\right).\label{eq:conclthm1}
\end{equation}
Note that we have assumed $r\geq2$, thus implicitly posited that
${e^{-2}\delta^{2}}\geq4{\Krand}$; we multiply the bound by $\exp(2)$
to cover the other case. 
\end{proof}
We conclude by proving Proposition~\ref{pro:thm2}. 
\begin{proof}[Proof of Proposition~\ref{pro:thm2}]
 As in \eqref{eq:main}, we have 
\begin{equation}
\left\Vert P\left(x^{\nu}\right)\right\Vert _{Y}^{2}=\sum_{k=1}^{\infty}x_{k}^{2}\left\Vert P\left(e_{k}\right)\right\Vert _{Y}^{2}+2\sum_{\substack{k,l\\
k<l
}
}\beta_{k}^{\nu}\beta_{l}^{\nu}x_{k}x_{l}\left\langle P\left(\e_{k}\right),P\left(\e_{l}\right)\right\rangle _{Y}.\label{eq:main2}
\end{equation}
Observing that the family $\alpha_{k,l}^{\nu}=\{\beta_{k}^{\nu}\beta_{l}^{\nu}\}_{\substack{k,l\\
k<l
}
}$ is made of independent Bernoulli variables, equal almost surely to
$-1$ or 1, with $1/2$ as probability of success, we apply Proposition~\ref{prop:largeDevEst}
with 
\[
v_{k,l}=2\frac{x_{k}}{\left\Vert x\right\Vert _{X}}\frac{x_{l}}{\left\Vert x\right\Vert _{X}}\left\langle P\left(\e_{k}\right),P\left(\e_{l}\right)\right\rangle _{Y}
\]
and obtain for all $\delta>0$ 
\[
\mathbb{P}\left(\frac{\|P(x^{\nu})\|_{Y}^{2}}{\|x\|_{X}^{2}}-\mathbb{E}\left(\frac{\left\Vert P\left(x^{\nu}\right)\right\Vert _{Y}^{2}}{\left\Vert x\right\Vert _{X}^{2}}\right)<-\delta\right)<\exp\left(-\frac{\delta^{2}}{2K}\right),
\]
with 
\[
\begin{split}K & =\sum_{\substack{k,l\\
k<l
}
}\left(2\frac{x_{k}x_{l}}{\left\Vert x\right\Vert _{X}^{2}}\left\langle P\left(\e_{k}\right),P\left(\e_{l}\right)\right\rangle _{Y}\right)^{2}\\
 & \leq2\sum_{k,l}\left(\frac{\left|x_{k}\right|\left|x_{l}\right|}{\left\Vert x\right\Vert _{X}^{2}}\left\Vert P\e_{k}\right\Vert _{Y}\left\Vert P\e_{l}\right\Vert _{Y}\right)^{2}\\
 & =2\left(\sum_{k}\frac{\left|x_{k}\right|^{2}}{\left\Vert x\right\Vert _{X}^{2}}\left\Vert P\e_{k}\right\Vert _{Y}^{2}\right)^{2}\\
 & \leq2\Krand^{2}.\qedhere
\end{split}
\]
\end{proof}

\end{document}